\newtheorem{definition}{Definition}[section]
\newtheorem{Example}[definition]{Example}
\newtheorem{thm}[definition]{Theorem}
\newtheorem{cor}[definition]{Corollary}
\newtheorem{lem}[definition]{Lemma}
\date{}
\begin{document}
	\clearpage
	\thispagestyle{empty}
	
	\setcounter{page}{1}
	\pagenumbering{roman}
	\newpage
	\begin{center}
		\textbf{Bounded Composition Operators on Hilbert Space of Complex-Valued Harmonic Functions}
	\end{center}
\vspace{2mm}
	\begin{center}
		${Tseganesh}$ ${Getachew}$ ${Gebrehana}^{1}$ and ${Hunduma}$ ${Legesse}$ ${Geleta}^{2}$ 	
		\\\textbf{tseganesh.getachew@aau.edu.et and hunduma.legesse@aau.edu.et}\\
		\vspace{8mm}
		\textbf{Department of Mathematics, College of Natural and Computational Sciences, Addis Ababa University, Addis Ababa, Ethiopia.}
	\end{center}	     
	\pagenumbering{arabic} 
	\vspace{6mm}
	\textit{\textbf{Abstract.} In this paper, we study  composition operators on Hilbert space of complex-valued harmonic functions. In particular, we explore isometries,  the type of self-map that generate bounded composition operator, and characterize the boundedness of composition operator in terms of Poisson integral. Furthermore, we establish the relation between reproducing kernels and composition operators on Hilbert space of complex-valued harmonic functions.}
	\newline \\
	\textbf{Keywords/phrases}: Composition Operators; Integral means;  Mean-value Theorem; Poisson Integral;  Reproducing kernel. 
	\section{Introduction}
	\pagenumbering{arabic}
	Let $\mathbb{D}$ denote the unit disc in $\mathbb{C}.$ The  Hilbert space of complex-valued harmonic functions $\mathnormal{H}_{h}^{2}(\mathbb{D})$ with square summable coefficients with norm $||f||_{\mathnormal{H}_{h}^{2}(\mathbb{D})}$ has been introduced respectively in \cite{GTGH} as follows:
	\begin{center}
		$\mathnormal{H}_{h}^{2}(\mathbb{D})= \lbrace f:\mathbb{D} \rightarrow \mathbb{C}: f(z)= \sum_{n=0}^{\infty} a_{n}z^{n} + \overline {\sum_{n=0}^{\infty} {b_{n}z^{n}}}$ with $\sum_{n=0}^{\infty} |a_{n}|^{2}+|b_{n}|^{2} <\infty \rbrace,$
	\end{center}  
	
	\begin{center}
		$||f||_{\mathnormal{H}_{h}^{2}(\mathbb{D})}=(\sum_{n=0}^{\infty}|a_{n}|^{2}+|b_{n}|^{2})^{\frac{1}{2}} < \infty.$ 
	\end{center}
It is known that $\mathnormal{H}_{h}^{2}(\mathbb{D})$ is the Hilbert space of complex-valued harmonic functions under the above norm (see, \cite{GTGH}). If  $\phi:\mathbb{D} \rightarrow \mathbb{D}$ is analytic, then the composition operator $\mathnormal{C}_{\phi}:\mathnormal{H}_{h}^{2}(\mathbb{D}) \rightarrow \mathnormal{H}_{h}^{2}(\mathbb{D})$ is defined by  
	\begin{center}
		$\mathnormal{C}_{\phi}f=\mathnormal{C}_{\phi}(h+\overline{g})=(h+\overline{g})$ o $\phi,$ for  $f=h+\overline{g} \in \mathnormal{H}_{h}^{2}(\mathbb{D}).$
	\end{center} 
 Boundedness, and other properties of composition operators on Hilbert space of analytic function in the unit disc have been characterized in many contexts (see, for example, \cite{CTW}, \cite{CCMB19}, \cite{GPTM},\cite{MMVD}, \cite{MBR} and \cite{NEA}). Broadly, one is interested in extracting properties of $\mathnormal{C}_{\phi}$ acting on space of analytic function in the unit disc from function theoretic properties of $\phi$ and vice-versa. For general information about composition operators on Hilbert space (weighted Hilbert space) of analytic function in $\mathbb{D}$, we refer the reader to the monographs by (\cite{AIR}, \cite{CCMB95}, \cite{HM}, \cite{LK}, \cite{SJH93} and \cite{ZK}). Moreover, many interesting results have been found for composition operators of analytic function on Hardy, Hardy-Hilbert and Bergman spaces (refer, \cite{BDM}, \cite{MARA}, \cite{RHB} and \cite{SJH98}). \newline 
 
In this paper, we extend the study of bounded composition operators on Hilbert space of analytic function to bounded composition operators on Hilbert space of complex-valued harmonic functions in the unit disc. Analogous to the analytic case we use properties of $\phi$ to determine first when $\mathnormal{C}_{\phi}$ is an isometry, and then when $\mathnormal{C}_{\phi}$ is bounded. Moreover, we characterize the boundedness of $\mathnormal{C}_{\phi}$ in terms of Poisson integral and also study the relationship between reproducing kernel and composition operator on $\mathnormal{H}_{h}^{2}(\mathbb{D}).$ Therefore, in this paper we explore, characterize and study relation between operators with the following leading problem statements:
	
	\begin{itemize}
		\item [1.] Explore the type of self-map $\phi$ on $\mathbb{D}$ so that the composition operator $\mathnormal{C}_{\phi}$ is bounded on Hilbert space of complex-valued harmonic functions in the unit disc and vice-versa.  
		\item [2.] Characterize the boundedness of the composition operator $\mathnormal{C}_{\phi}$ in terms of Poisson integral on Hilbert space of complex-valued harmonic functions in the unit disc.
		\item [3.] Study the relationship between reproducing kernels and composition operators on Hilbert space of complex-valued harmonic functions in the unit disc.
	\end{itemize}
	\section{Preliminaries}	
In this section we review some important concepts that we will use to prove main results in section 3. We begin by stating the well known results, some useful definitions, and theorems. \newline

The Hardy space $\mathnormal{H^2}$ is the set of analytic functions on the open unit disc, denoted by $\mathbb{D}:=\lbrace z \in \mathbb{C} : |z|<1 \rbrace$ , whose power series representation has square summable coefficients. That is, 
\begin{center} 
	$\mathnormal{H^2} (\mathbb{D}) = \{f(z)= \sum_{n=0}^{\infty} a_n z^n: \sum_{n=0}^{\infty} \vert a_n \rvert ^2 < \infty\}$.      
\end{center}
A composition operator generated by an analytic self-map $\phi : \mathbb{D} \rightarrow \mathbb{D}$ is defined by
\begin{center}
	$\mathnormal{C}_{\phi}f=fo\phi$, $f \in \mathnormal{H}^{2}(\mathbb{D}).$
\end{center}
A central question of interest is identifying the additional conditions that must be imposed on a function $\phi$ for the associated composition operator $\mathnormal{C}_{\phi}$ to act boundedly on the Hardy space $\mathnormal{H}^{2}(\mathbb{D}).$ A foundational result in this context is Littlewood's subordination Theorem, which establishes that every analytic self-map $\phi$ of the unit disc induces a bounded composition operator $\mathnormal{C}_{\phi}:\mathnormal{H}^{2}(\mathbb{D}) \rightarrow \mathnormal{H}^{2}(\mathbb{D}).$ In Shapiro \cite{SJH20}, the norm of a composition operator was studied in relation to inner functions. It was shown that the norms of composition opeartors on $\mathnormal{H}^{2}(\mathbb{D})$ are bounded both above and below by quantities that depends solely on the value of $\phi$ at zero, i.e., $\phi.$ A natural question then arises: under what conditions do these operators attain their upper norm bounds? More precisely, which analytic self-maps $\phi$ maximize the operator norm of $\mathnormal{C}_{\phi}?$ Shapiro \cite{SJH20} addressed this question and demonstrated that the norm of $\mathnormal{C}_{\phi}$ is maximal if and only if $\phi$ is an inner function.    
\newline \\ 
In general, for the Hardy space $\mathnormal{H^2}(\mathbb{D})$, the Littlewood subordination theorem, along with some straightforward computations involving variable changes induced by automorphisms of the unit disc, implies that the composition operator $\mathnormal{C}_{\phi}$ is bounded for every analytic function $\phi$ that maps the unit disc $\mathbb{D}$ into itself. This approach also leads to the following estimate for the norm of composition operators on $\mathnormal{H^2}(\mathbb{D}):$   
\begin{center}
	$(\frac{1}{1-|\phi(0)|^{2}})^{\frac{1}{2}} \le ||\mathnormal{C}_{\phi}|| \le (\frac{1+|\phi(0)|}{1-|\phi(0)|})^{\frac{1}{2}}$.
\end{center}  
This is the result highlights the connection between the behavior of the operator $\mathnormal{C}_{\phi}$ and the analytic and geometric properties of the symbol $\phi.$ When operator theorists examine a new operator, they typically begin by exploring questions of boundedness and compactness, as well as investigating its norm, spectrum, and adjoint. Although a complete understanding is still developing, significant progress has been made, and it is anticipated that the answers will be expressed in terms of the analytic and geometric characterstics of $\phi.$     
\newline
\begin{definition}
	{\cite{GTGH}.} Let $f(z)=h(z)+\overline{g(z)}$, where $h(z)=\sum_{n=o}^{\infty} a_nz^n$ and $g(z)={\sum_{n=o}^{\infty}b_nz^n}$ are analytic. Then the Hilbert space of complex-valued harmonic functions in the unit disc denoted by $\mathnormal{H}_{h}^{2}(\mathbb{D})$ is defined as
	\begin{center}
		$\mathnormal{H}_{h}^{2}(\mathbb{D})= \lbrace f:\mathbb{D} \rightarrow \mathbb{C}: f(z)= \sum_{n=0}^{\infty} a_{n}z^{n} + \overline {\sum_{n=0}^{\infty} {b_{n}z^{n}}}$ with $\sum_{n=0}^{\infty} |a_{n}|^{2}+|b_{n}|^{2} <\infty \rbrace$.
	\end{center}  
\end{definition}
	The norm defined on Hilbert space of complex-valued harmonic functions in $\mathbb{D}$ has another equivalent representation in terms of integral means.
	\begin{thm}
		{\cite{GTGH}.}The norm defined on $\mathnormal{H}_{h}^{2}(\mathbb{D})$ has represented by  
		\begin{center}
			$\mathnormal{M}_{2}^{2} (f,r) = \frac{1}{2\pi} \int_{-\pi}^{\pi} |f(re^{i\theta})|^{2} d\theta$
		\end{center}
		where $\mathnormal{M}_{2}^{2} (f,r)$ denote the integral mean, $f$ is assumed to be a complex-valued harmonic functions on $\mathbb{D}$ and $0 \leq r < 1$. 		
	\end{thm}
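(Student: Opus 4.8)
The plan is to expand $f$ in its harmonic power series, restrict to a circle $|z|=r$, and evaluate the integral mean by orthogonality of the exponential system $\{e^{in\theta}\}_{n\in\mathbb{Z}}$ on $[-\pi,\pi]$; then let $r\uparrow 1$. First, write $f(z)=h(z)+\overline{g(z)}$ with $h(z)=\sum_{n=0}^{\infty}a_nz^n$ and $g(z)=\sum_{n=0}^{\infty}b_nz^n$, using the normalization $b_0=0$ under which this decomposition (hence the coefficient sequence) is uniquely determined. Fix $r\in[0,1)$. Since $\sum_{n}|a_n|^2+|b_n|^2<\infty$ the coefficients are bounded, so $\sum_n|a_n|r^n$ and $\sum_n|b_n|r^n$ converge; therefore the series for $h(re^{i\theta})$ and $g(re^{i\theta})$ converge uniformly in $\theta$, which legitimizes term-by-term integration over $[-\pi,\pi]$. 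Observe that
\[
f(re^{i\theta})=\sum_{n=0}^{\infty}a_n r^n e^{in\theta}+\sum_{n=1}^{\infty}\overline{b_n}\,r^n e^{-in\theta},
\]
so $\theta\mapsto f(re^{i\theta})$ is a function in $L^2[-\pi,\pi]$ whose Fourier coefficients are $a_n r^n$ at frequency $n\ge 0$ and $\overline{b_{|n|}}\,r^{|n|}$ at frequency $n<0$.

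Next, apply Parseval's identity (equivalently, multiply out $|f(re^{i\theta})|^2$ and use $\tfrac{1}{2\pi}\int_{-\pi}^{\pi}e^{ik\theta}\,d\theta=\delta_{k,0}$; the cross terms between the two series vanish because they pair strictly positive with strictly positive, or strictly negative with strictly negative, frequencies). This gives
\[
M_2^2(f,r)=\frac{1}{2\pi}\int_{-\pi}^{\pi}|f(re^{i\theta})|^2\,d\theta=\sum_{n=0}^{\infty}|a_n|^2 r^{2n}+\sum_{n=0}^{\infty}|b_n|^2 r^{2n}.
\]
Each summand is nondecreasing in $r$, so $r\mapsto M_2^2(f,r)$ is nondecreasing on $[0,1)$; letting $r\uparrow 1$ and invoking monotone convergence for series of nonnegative terms yields
\[
\lim_{r\to 1^{-}}M_2^2(f,r)=\sup_{0\le r<1}M_2^2(f,r)=\sum_{n=0}^{\infty}|a_n|^2+\sum_{n=0}^{\infty}|b_n|^2=\|f\|_{H_h^2(\mathbb{D})}^2,
\]
which is the asserted representation of the norm through the integral means.

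The only delicate point — and the one to state carefully — is the bookkeeping of the zero-frequency (constant) term: without the convention $b_0=0$, the frequency-$0$ coefficient of $f(re^{i\theta})$ is $a_0+\overline{b_0}$, producing $|a_0+\overline{b_0}|^2$ in place of $|a_0|^2+|b_0|^2$, so the identity with the stated norm holds precisely under the normalization used to define $H_h^2(\mathbb{D})$ in \cite{GTGH}. Beyond that, the argument is routine: uniform convergence on $|z|=r<1$ justifies term-by-term integration, orthogonality of the exponentials kills the cross terms, and monotonicity in $r$ upgrades the pointwise limit to the supremum. I expect the harmonic ($h+\overline{g}$) bookkeeping, rather than any analytic subtlety, to be the main thing requiring care.
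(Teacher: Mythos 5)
Your argument is correct and is the standard proof of this fact; note that the paper itself offers no proof here, since Theorem 2.2 is imported verbatim from \cite{GTGH}, so there is nothing in the present text to compare against step by step. Your route --- uniform convergence of the two power series on $|z|=r<1$ to justify term-by-term integration, orthogonality of $\{e^{in\theta}\}$ to kill the cross terms between the positive-frequency part $\sum a_n r^n e^{in\theta}$ and the negative-frequency part $\sum \overline{b_n}r^n e^{-in\theta}$, and monotone convergence as $r\uparrow 1$ --- is exactly how this identity is established. Your flag about the zero-frequency term is the one genuinely delicate point and you are right to insist on it: as the space is defined in this paper (both sums starting at $n=0$, with no stated normalization $b_0=0$), the constant Fourier coefficient of $f(re^{i\theta})$ is $a_0+\overline{b_0}$, so Parseval yields $|a_0+\overline{b_0}|^2$ rather than $|a_0|^2+|b_0|^2$, and the stated equality of the integral mean with $\sum|a_n|^2+|b_n|^2$ holds only under the convention $b_0=0$ (or after an equivalent regrouping of the constant term). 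One further cosmetic point worth making explicit, which you do implicitly: for fixed $r<1$ the integral mean equals $\sum(|a_n|^2+|b_n|^2)r^{2n}$, not the norm itself; the norm is recovered only as the supremum or limit as $r\to1^-$, so the displayed identity in the theorem should be read in that limiting sense (as you state it).
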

	\begin{thm}
		{\cite{GTGH}.}(Reproducing Kernels). For $\alpha \in \mathbb{D}$, the function $\mathnormal{K}_{\alpha}$ defined by 
		\begin{center}
			$\mathnormal{K}_{\alpha}(z)=\frac{1}{1-\bar{\alpha}z}+\frac{1}{1-\alpha\bar{z}}$
		\end{center}
		has the property that $\langle f, \mathnormal{K}_{\alpha} \rangle=f(\alpha)$ for every $f$ in $\mathnormal{H}_{h}^{2}(\mathbb{D})$.  
	\end{thm}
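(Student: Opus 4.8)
The plan is to verify directly that the stated function $\mathnormal{K}_{\alpha}$ reproduces point evaluation by expanding it in the standard harmonic orthonormal-type basis of $\mathnormal{H}_{h}^{2}(\mathbb{D})$ and computing the inner product against a generic $f=h+\overline{g}$. First I would expand each geometric series: $\frac{1}{1-\bar\alpha z}=\sum_{n=0}^{\infty}\bar\alpha^{\,n}z^{n}$, which is the analytic part of $\mathnormal{K}_{\alpha}$, and $\frac{1}{1-\alpha\bar z}=\sum_{n=0}^{\infty}\alpha^{n}\bar z^{n}=\overline{\sum_{n=0}^{\infty}\bar\alpha^{\,n}z^{n}}$, which is the co-analytic part. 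So in the notation $\mathnormal{K}_{\alpha}=H+\overline{G}$ we have $H(z)=\sum_n \bar\alpha^{\,n}z^{n}$ and $G(z)=\sum_n\bar\alpha^{\,n}z^{n}$, i.e. the analytic coefficients of $\mathnormal{K}_{\alpha}$ are $A_n=\bar\alpha^{\,n}$ and the co-analytic coefficients are $B_n=\bar\alpha^{\,n}$.

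Next I would recall that the norm on $\mathnormal{H}_{h}^{2}(\mathbb{D})$ comes from the inner product $\langle f_1,f_2\rangle=\sum_{n=0}^{\infty}\bigl(a_n^{(1)}\overline{a_n^{(2)}}+b_n^{(1)}\overline{b_n^{(2)}}\bigr)$, where $f_j=\sum a_n^{(j)}z^n+\overline{\sum b_n^{(j)}z^n}$; this is the polarization of the given norm and is exactly the structure that makes $\{z^n\}\cup\{\overline{z^n}\}$ (with the $n=0$ constant counted once appropriately) behave like an orthonormal family. Then for $f=h+\overline g$ with $h=\sum a_nz^n$, $g=\sum b_nz^n$ I would compute
\begin{equation*}
\langle f,\mathnormal{K}_{\alpha}\rangle=\sum_{n=0}^{\infty}\bigl(a_n\overline{A_n}+b_n\overline{B_n}\bigr)=\sum_{n=0}^{\infty}\bigl(a_n\overline{\bar\alpha^{\,n}}+b_n\overline{\bar\alpha^{\,n}}\bigr)=\sum_{n=0}^{\infty}a_n\alpha^{n}+\sum_{n=0}^{\infty}b_n\alpha^{n}.
\end{equation*}
The first sum is $h(\alpha)$ and the second is $g(\alpha)$, so $\langle f,\mathnormal{K}_{\alpha}\rangle=h(\alpha)+g(\alpha)$. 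The one delicate point is matching this with $f(\alpha)=h(\alpha)+\overline{g(\alpha)}$: I would note that because $g(\alpha)$ and $\overline{g(\alpha)}$ differ, one should be careful about which convention the inner product uses for the co-analytic part; writing the co-analytic piece of $\mathnormal{K}_\alpha$ as $\overline{\sum_n \alpha^{\,n}z^{n}}$ instead (so that its "$b_n$-coefficients" are $\alpha^{\,n}$) gives $\sum_n b_n\overline{\alpha^{\,n}}=\overline{\sum_n \bar b_n \alpha^{\,n}}$, and reconciling this with $\overline{g(\alpha)}=\overline{\sum_n b_n\alpha^n}$ forces the correct reading of Theorem stmt; I will present the computation in the convention under which $\langle f,\mathnormal{K}_\alpha\rangle=f(\alpha)$ comes out cleanly.

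I also need to justify that the series manipulations are legitimate: $\mathnormal{K}_{\alpha}\in\mathnormal{H}_{h}^{2}(\mathbb{D})$ because $\sum_n|\bar\alpha^{\,n}|^{2}+|\bar\alpha^{\,n}|^{2}=2\sum_n|\alpha|^{2n}=\frac{2}{1-|\alpha|^{2}}<\infty$ for $\alpha\in\mathbb{D}$, so the inner product $\langle f,\mathnormal{K}_{\alpha}\rangle$ is well defined for every $f\in\mathnormal{H}_{h}^{2}(\mathbb{D})$, and the interchange of summation with the inner product is justified by Cauchy–Schwarz together with absolute convergence of $\sum_n a_n\alpha^n$ and $\sum_n b_n\alpha^n$ (again by Cauchy–Schwarz, since $(a_n),(b_n)\in\ell^2$ and $(\alpha^n)\in\ell^2$). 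The main obstacle, as flagged above, is purely bookkeeping: keeping the conjugation bars on the co-analytic coefficients consistent so that the computed inner product lands on $f(\alpha)=h(\alpha)+\overline{g(\alpha)}$ rather than on $h(\alpha)+g(\alpha)$; once the convention is pinned down the rest is a one-line geometric-series identity.
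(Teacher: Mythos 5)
The paper does not actually prove this theorem; it is quoted from the authors' earlier work \cite{GTGH}, so there is no in-paper argument to compare against. Judged on its own, your overall strategy (expand $\mathnormal{K}_{\alpha}$ coefficient-wise and pair against $f=h+\overline{g}$) is the right and essentially only one, and your verification that $\mathnormal{K}_{\alpha}\in\mathnormal{H}_{h}^{2}(\mathbb{D})$ with squared norm $\tfrac{2}{1-|\alpha|^{2}}$ is correct. But the one step you flag as ``delicate'' is exactly the content of the theorem, and you do not resolve it: with the inner product you write down, $\langle f_1,f_2\rangle=\sum_n\bigl(a_n^{(1)}\overline{a_n^{(2)}}+b_n^{(1)}\overline{b_n^{(2)}}\bigr)$, your computation genuinely yields $h(\alpha)+g(\alpha)$, which is \emph{not} $f(\alpha)=h(\alpha)+\overline{g(\alpha)}$. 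Your proposed repair also fails: the co-analytic part of $\mathnormal{K}_{\alpha}$ is $\frac{1}{1-\alpha\bar z}=\sum_n\alpha^{n}\overline{z^{n}}=\overline{\sum_n\bar\alpha^{\,n}z^{n}}$, not $\overline{\sum_n\alpha^{n}z^{n}}$ (these differ unless $\alpha$ is real), so you cannot re-read the kernel's coefficients as $\alpha^{n}$. Ending with ``I will present the computation in the convention under which it comes out cleanly'' is circular; the convention must be identified and shown to be the one polarizing the given norm.

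The correct resolution is to change the inner product, not the kernel. The natural coordinates of $f=h+\overline{g}$ with respect to the family $\{z^{n}\}\cup\{\overline{z^{n}}\}$ are $(a_n,\overline{b_n})$, since the co-analytic part is $\overline{g}=\sum_n\overline{b_n}\,\overline{z^{n}}$. Hence the inner product polarizing $\|f\|^{2}=\sum_n(|a_n|^{2}+|b_n|^{2})$ and linear in the first slot is
\begin{equation*}
\langle f_1,f_2\rangle=\sum_{n=0}^{\infty}\Bigl(a_n^{(1)}\overline{a_n^{(2)}}+\overline{b_n^{(1)}}\,b_n^{(2)}\Bigr).
\end{equation*}
With $\mathnormal{K}_{\alpha}=H+\overline{G}$, $H=G=\sum_n\bar\alpha^{\,n}z^{n}$, this gives
\begin{equation*}
\langle f,\mathnormal{K}_{\alpha}\rangle=\sum_{n=0}^{\infty}a_n\,\overline{\bar\alpha^{\,n}}+\sum_{n=0}^{\infty}\overline{b_n}\,\bar\alpha^{\,n}=\sum_{n=0}^{\infty}a_n\alpha^{n}+\overline{\sum_{n=0}^{\infty}b_n\alpha^{n}}=h(\alpha)+\overline{g(\alpha)}=f(\alpha),
\end{equation*}
with convergence justified by Cauchy--Schwarz exactly as you argue. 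Insert this identification of the inner product (and drop the faulty re-reading of the kernel) and your proof is complete.
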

	The Mean-value theorem, Poisson kernel and Poisson's integral formula have been defined in various sources (refer, for example, \cite{ADR}, \cite{CJB} and \cite{MARA}) for the details.  
	\begin{thm}
	\cite{CJB} (Mean-value Theorem). Let $u:\Omega \rightarrow \mathbb{R}$ be a harmonic function and let $\overline{B(a;r)}$ be  a closed disc contained in $\Omega$. If $\gamma$ is the circle $|z-a|=r$, then
		\begin{center}
			$u(a)=\frac{1}{2\pi} \int_{0}^{2\pi} u(a+re^{i\theta})d\theta$.
		\end{center}
	\end{thm}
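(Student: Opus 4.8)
The plan is to reduce the statement about the real harmonic function $u$ to the Cauchy integral formula for an analytic function, exploiting the fact that on a disc every real harmonic function is the real part of an analytic function. First I would observe that since $\overline{B(a;r)}$ is a compact subset of the open set $\Omega$, there exists $R$ with $r<R$ such that $\overline{B(a;R)}\subset\Omega$; the disc $B(a;R)$ is simply connected, so on it $u$ admits a harmonic conjugate $v$, and $f:=u+iv$ is analytic on $B(a;R)$. (Concretely $v$ can be built by integrating $-u_y\,dx+u_x\,dy$ along paths in $B(a;R)$, the integral being path-independent because $u$ is harmonic.)

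Next I would apply the Cauchy integral formula to $f$ on the circle $\gamma:|z-a|=r$, which lies inside $B(a;R)$ where $f$ is analytic:
\begin{center}
$f(a)=\dfrac{1}{2\pi i}\displaystyle\int_{\gamma}\dfrac{f(z)}{z-a}\,dz.$
\end{center}
Parametrizing $\gamma$ by $z=a+re^{i\theta}$, $0\le\theta\le 2\pi$, so that $dz=ire^{i\theta}\,d\theta$ and $z-a=re^{i\theta}$, the integral collapses to
\begin{center}
$f(a)=\dfrac{1}{2\pi}\displaystyle\int_{0}^{2\pi} f(a+re^{i\theta})\,d\theta.$
\end{center}
Finally, taking real parts of both sides and using $u=\operatorname{Re}f$ (and that real part commutes with the integral) yields $u(a)=\frac{1}{2\pi}\int_{0}^{2\pi}u(a+re^{i\theta})\,d\theta$, which is the claim.

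The only genuinely delicate point is the first step: the existence of the harmonic conjugate $v$, which is why the argument is carried out on the slightly larger simply connected disc $B(a;R)$ rather than on $\Omega$ itself (where a conjugate need not exist). Everything after that is the routine parametrization of the Cauchy formula. An alternative route avoiding analytic functions entirely would be to integrate $\Delta u=0$ over $B(a;\rho)$ and use Green's identity to show $\frac{d}{d\rho}\big(\frac{1}{2\pi}\int_0^{2\pi}u(a+\rho e^{i\theta})\,d\theta\big)=0$ for $0<\rho\le r$, then let $\rho\to 0^{+}$; I would mention this as a remark but prefer the analytic-function proof for brevity.
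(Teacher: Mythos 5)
Your proof is correct. The paper does not actually prove this statement --- it is quoted verbatim as a classical result from Conway \cite{CJB} --- but your argument (pass to a slightly larger disc $B(a;R)\subset\Omega$ where a harmonic conjugate exists, apply the Cauchy integral formula to $f=u+iv$ on $|z-a|=r$, and take real parts) is precisely the standard proof given in that reference, and all the steps, including the compactness argument producing $R>r$ and the parametrization collapsing $\frac{dz}{z-a}$ to $i\,d\theta$, are sound.
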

	Note that in \cite{ADR}, If $z=re^{i\theta}$, then
	\begin{center}
		$P_{z}(t)=\frac{1-r^{2}}{|e^{it}-re^{i\theta}|^{2}}=\frac{1-r^{2}}{|e^{i(t-\theta}-r|^{2}}=P_{r}(t-\theta)$
	\end{center}
	and since $|e^{i(t-\theta}-r|^{2}=1-2rcos(t-\theta)+r^{2},$ we see that 
	\begin{center}
		$P_{r}(t-\theta)=\frac{1-r^{2}}{1-2rcos(t-\theta)+r^{2}}=\frac{1-r^{2}}{1-2rcos(\theta-t)+r^{2}}=P_{r}(\theta-t).$
	\end{center} 
	Thus for $0 \leq r<1,$ $P_{r}(x)$ is an even function of $x.$ Note also that 
	$P_{r}(x)$ is positive and decreasing on $[0,2\pi].$ In complex plane, the Poisson kernel for the unit disc is given by 
	\begin{center}
		$P_{r}(\theta)=\sum_{n=-\infty}^{\infty} r^{|n|}e^{in\theta}=\frac{1-r^{2}}{1-2rcos\theta+r^{2}}=\Re(\frac{1+re^{i\theta}}{1+re^{i\theta}}),$ $0\leq r<1.$
	\end{center} 
	\begin{thm} \cite{ADR}
		The Poisson kernel $P_{z}(t)=\frac{1-|z|^{2}}{|e^{it}-z|^{2}}$ is a harmonic function of $z.$ Moreover, 
		\begin{center} 
			$P_{z}(t) \geq 0$ and $\frac{1}{2\pi}\int_{0}^{2\pi}P_{r}(t-\theta) d\theta=1.$
		\end{center}	
	\end{thm}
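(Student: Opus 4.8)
The plan is to dispatch the three assertions in turn, with harmonicity carrying most of the weight. The key observation is the algebraic identity
\begin{center}
$P_{z}(t)=\dfrac{1-|z|^{2}}{|e^{it}-z|^{2}}=\Re\!\left(\dfrac{e^{it}+z}{e^{it}-z}\right),\qquad z\in\mathbb{D},$
\end{center}
which I would verify by multiplying the numerator and denominator of $\frac{e^{it}+z}{e^{it}-z}$ by $\overline{e^{it}-z}=e^{-it}-\bar z$: the denominator becomes $|e^{it}-z|^{2}$, while the numerator is $(1-|z|^{2})+\bigl(ze^{-it}-\bar z e^{it}\bigr)$, and the bracketed term is purely imaginary because $\bar z e^{it}=\overline{ze^{-it}}$. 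Taking real parts yields the formula. Since $|z|<1$ keeps $e^{it}-z$ away from $0$, the map $z\mapsto\frac{e^{it}+z}{e^{it}-z}$ is analytic on $\mathbb{D}$ for each fixed $t$, so its real part $P_{z}(t)$ is harmonic in $z$ on $\mathbb{D}$.

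Positivity is then immediate: for $|z|<1$ both $1-|z|^{2}$ and $|e^{it}-z|^{2}$ are strictly positive, hence $P_{z}(t)>0\ge 0$. For the normalization I would set $z=re^{i\theta}$, use that $P_{z}(0)=P_{r}(-\theta)=P_{r}(\theta)$ by the evenness of $P_{r}$ recorded above, and invoke $2\pi$-periodicity to write
\begin{center}
$\dfrac{1}{2\pi}\int_{0}^{2\pi}P_{r}(t-\theta)\,d\theta=\dfrac{1}{2\pi}\int_{0}^{2\pi}P_{r}(\theta)\,d\theta=\dfrac{1}{2\pi}\int_{0}^{2\pi}P_{re^{i\theta}}(0)\,d\theta.$
\end{center}
The last integral is the mean of the harmonic function $z\mapsto P_{z}(0)$ over the circle $|z|=r$, so by the Mean-value Theorem (Theorem 2.4, with centre $a=0$) it equals $P_{0}(0)=\frac{1-0}{|e^{i0}|^{2}}=1$.

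A self-contained alternative for the normalization avoids the Mean-value Theorem: expand $P_{r}(\theta)=\sum_{n=-\infty}^{\infty}r^{|n|}e^{in\theta}$, which converges uniformly in $\theta$ for each fixed $r<1$, and integrate term by term using $\frac{1}{2\pi}\int_{0}^{2\pi}e^{in\theta}\,d\theta=\delta_{n,0}$, so that only the $n=0$ term survives. I do not expect any genuine obstacle here; the only points that warrant a line of justification are the evenness-plus-periodicity reduction in the change of variables and the uniform convergence underpinning the term-by-term integration.
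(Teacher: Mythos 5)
Your proposal is correct. Note, however, that the paper offers no proof of this statement at all: it is Theorem 2.5 in the Preliminaries, quoted from the reference \cite{ADR} and used as a known fact, so there is no in-paper argument to compare yours against. What you supply is the standard proof, and every step checks out: the computation showing $P_{z}(t)=\Re\bigl(\frac{e^{it}+z}{e^{it}-z}\bigr)$ is right (the cross terms $ze^{-it}-\bar z e^{it}$ are indeed purely imaginary), harmonicity follows since $e^{it}-z\neq 0$ on $\mathbb{D}$ makes the fraction analytic in $z$, positivity is immediate, and the normalization via periodicity plus the mean-value property applied to the harmonic function $z\mapsto P_{z}(0)$ at the centre $a=0$ is legitimate because $\overline{B(0;r)}\subset\mathbb{D}$ for $r<1$. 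Your alternative route through the uniformly convergent expansion $\sum_{n=-\infty}^{\infty}r^{|n|}e^{in\theta}$ is also sound and matches the identity the paper itself records in Section 2, so either version would serve as a complete proof of the omitted result.
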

	\begin{thm} \cite{CJB}
Let $\mathbb{D}=\lbrace z:|z|<1 \rbrace$ and suppose that $f:\partial \mathbb{D} \rightarrow \mathbb{R}$ is continuous function. Then there is a continuous function $\mathnormal{u}:\overline{\mathbb{D}} \rightarrow \mathbb{R}$ such that $u(z)=f(z)$ for $z$ in $\partial \mathbb{D},$ $u$ is harmonic in $\mathbb{D},$ and for $0 \leq r<1,$ $0 \leq \theta \leq 2\pi, u$ is uniquely given by 
		
			\begin{center}
		$u(re^{i\theta})=\frac{1}{2\pi}\int_{-\pi}^{\pi}P_{r}(\theta-t) f(e^{it}) dt.$
			\end{center} 
	\end{thm}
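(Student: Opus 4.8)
The plan is to produce the function $u$ explicitly as the Poisson integral of $f$, to verify that it is harmonic on $\mathbb{D}$, then to show that it extends continuously to $\overline{\mathbb{D}}$ with boundary values $f$, and finally to obtain uniqueness from the maximum principle.

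First I would set
\[
u(re^{i\theta})=\frac{1}{2\pi}\int_{-\pi}^{\pi}P_{r}(\theta-t)\,f(e^{it})\,dt\quad\text{for }0\le r<1,
\]
and $u(e^{i\theta})=f(e^{i\theta})$ on $\partial\mathbb{D}$. Harmonicity of $u$ on $\mathbb{D}$ follows from the preceding theorem, which asserts that $P_{z}(t)=P_{r}(\theta-t)$ is a harmonic function of $z$: since this kernel together with its $z$-derivatives is continuous and bounded on $K\times[-\pi,\pi]$ for each compact $K\subset\mathbb{D}$, one may differentiate under the integral sign to get $\Delta u(z)=\frac{1}{2\pi}\int_{-\pi}^{\pi}\bigl(\Delta_{z}P_{r}(\theta-t)\bigr)f(e^{it})\,dt=0$. (Equivalently, with $z=re^{i\theta}$ one has $P_{r}(\theta-t)=\Re\!\bigl(\frac{e^{it}+z}{e^{it}-z}\bigr)$, so $u=\Re F$ where $F(z)=\frac{1}{2\pi}\int_{-\pi}^{\pi}\frac{e^{it}+z}{e^{it}-z}f(e^{it})\,dt$ is analytic on $\mathbb{D}$.) Continuity of $u$ on $\mathbb{D}$ is then automatic.

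The core of the argument is the boundary behaviour: I must show $u(z)\to f(e^{i\theta_{0}})$ as $z\to e^{i\theta_{0}}$. Writing $z=re^{i\theta}$ and using $\frac{1}{2\pi}\int_{-\pi}^{\pi}P_{r}(\theta-t)\,dt=1$ from the preceding theorem,
\[
u(re^{i\theta})-f(e^{i\theta_{0}})=\frac{1}{2\pi}\int_{-\pi}^{\pi}P_{r}(\theta-t)\bigl(f(e^{it})-f(e^{i\theta_{0}})\bigr)\,dt.
\]
Fix $\varepsilon>0$. Since $f$ is continuous on the compact set $\partial\mathbb{D}$ it is uniformly continuous there, so there is $\delta\in(0,\pi)$ with $|f(e^{it})-f(e^{i\theta_{0}})|<\varepsilon$ whenever $|t-\theta_{0}|\le 2\delta$. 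Taking $|\theta-\theta_{0}|<\delta$, I split the integral over $\{t:|t-\theta|\le\delta\}$ and its complement in $[-\pi,\pi]$. On the first set $|t-\theta_{0}|\le 2\delta$, so positivity of $P_{r}$ and its total mass $1$ bound that piece by $\varepsilon$. On the second set $P_{r}$ is positive, even, and decreasing on $[0,2\pi]$ (as noted in the excerpt), hence $P_{r}(\theta-t)\le P_{r}(\delta)=\frac{1-r^{2}}{1-2r\cos\delta+r^{2}}\to 0$ as $r\to 1$; since $|f(e^{it})-f(e^{i\theta_{0}})|\le 2M$ with $M=\sup_{\partial\mathbb{D}}|f|$, that piece is at most $2M\,P_{r}(\delta)<\varepsilon$ once $r$ is sufficiently close to $1$. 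Therefore $|u(re^{i\theta})-f(e^{i\theta_{0}})|<2\varepsilon$ for $z$ near $e^{i\theta_{0}}$, which together with continuity of $f$ on $\partial\mathbb{D}$ shows $u\in C(\overline{\mathbb{D}})$.

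For uniqueness, suppose $u_{1},u_{2}$ both satisfy the conclusion; then $v=u_{1}-u_{2}$ is harmonic in $\mathbb{D}$, continuous on $\overline{\mathbb{D}}$, and vanishes on $\partial\mathbb{D}$. By the maximum and minimum principles for harmonic functions --- themselves consequences of the Mean-value Theorem recorded above --- $v$ attains both extrema on $\partial\mathbb{D}$, so $v\equiv 0$ on $\overline{\mathbb{D}}$; hence $u_{1}=u_{2}$, and in particular the Poisson integral is the only representation with the stated properties. I expect the main obstacle to be the boundary-limit estimate --- organising the two-region split and controlling the decay of $P_{r}(\delta)$ as $r\to 1$ --- since harmonicity and uniqueness follow quickly from the kernel properties already established together with the maximum principle.
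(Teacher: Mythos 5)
The paper offers no proof of this theorem --- it is quoted from Conway \cite{CJB} as a known result --- and your argument is the standard one found in that source: define $u$ as the Poisson integral, obtain harmonicity from $u=\Re F$ with $F$ analytic (or by differentiating under the integral), establish the boundary limit by the approximate-identity splitting into $\{|t-\theta|\le\delta\}$ and its complement, and deduce uniqueness from the maximum principle. Your proof is correct; the one small caveat is that $P_{r}(x)$ is decreasing only on $[0,\pi]$ (not on all of $[0,2\pi]$ as the paper's preliminary remark asserts), so the bound $P_{r}(\theta-t)\le P_{r}(\delta)$ on the far region should be justified by first using the evenness and $2\pi$-periodicity of $P_{r}$ to reduce the argument to $[\delta,\pi]$.
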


	\begin{cor}
\cite{CJB} If $u:\overline{\mathbb{D}} \rightarrow \mathbb{R}$ is a continuous function that is harmonic in $\mathbb{D}$, then 
 \begin{center}
$u(re^{i\theta})=\frac{1}{2\pi} \int_{-\pi}^{\pi} P_{r}(\theta-t) u(e^{it}) dt$  \end{center} 
for $0 \leq r<1$ and all $\theta.$ Moreover, $u$ is the real part of the analytic function 
		\begin{center}
			$f(z)=\frac{1}{2\pi} \int_{-\pi}^{\pi} \frac{e^{it}+z}{e^{it}-z} u(e^{it})dt.$
		\end{center}
	\end{cor}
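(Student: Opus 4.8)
\emph{Proof strategy.} The plan is to obtain this corollary as a direct consequence of the preceding theorem on the Dirichlet problem for the disc, together with the maximum principle for harmonic functions. First I would restrict $u$ to the boundary: since $u$ is continuous on $\overline{\mathbb{D}}$, the function $f:=u|_{\partial\mathbb{D}}:\partial\mathbb{D}\to\mathbb{R}$ is continuous. Applying the previous theorem to this $f$ yields a continuous function $v:\overline{\mathbb{D}}\to\mathbb{R}$ that is harmonic in $\mathbb{D}$, satisfies $v=f=u$ on $\partial\mathbb{D}$, and is given by $v(re^{i\theta})=\frac{1}{2\pi}\int_{-\pi}^{\pi}P_{r}(\theta-t)\,u(e^{it})\,dt$.

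Next I would show $u=v$ on $\overline{\mathbb{D}}$. The difference $w:=u-v$ is continuous on $\overline{\mathbb{D}}$, harmonic in $\mathbb{D}$, and identically $0$ on $\partial\mathbb{D}$; applying the maximum principle to $w$ and to $-w$, both $\sup_{\overline{\mathbb{D}}}w$ and $\sup_{\overline{\mathbb{D}}}(-w)$ are attained on $\partial\mathbb{D}$, where $w=0$, so $w\equiv 0$. (Alternatively, this is exactly the uniqueness clause already asserted in the preceding theorem.) Therefore $u(re^{i\theta})=\frac{1}{2\pi}\int_{-\pi}^{\pi}P_{r}(\theta-t)\,u(e^{it})\,dt$ for all $0\le r<1$ and all $\theta$, which is the first assertion.

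For the second assertion, I would first verify that the candidate $f(z)=\frac{1}{2\pi}\int_{-\pi}^{\pi}\frac{e^{it}+z}{e^{it}-z}\,u(e^{it})\,dt$ defines an analytic function on $\mathbb{D}$: for each fixed $t$ the map $z\mapsto\frac{e^{it}+z}{e^{it}-z}$ is analytic on $\mathbb{D}$, since its only singularity lies at $e^{it}\in\partial\mathbb{D}$, and the integrand is jointly continuous in $(z,t)$ on $K\times[-\pi,\pi]$ for every compact $K\subset\mathbb{D}$; hence by Fubini's theorem together with Morera's theorem (or by differentiating under the integral sign) $f$ is analytic in $\mathbb{D}$. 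Then, writing $z=re^{i\theta}$ and using $\frac{e^{it}+z}{e^{it}-z}=\frac{1+re^{i(\theta-t)}}{1-re^{i(\theta-t)}}$, a short computation recovers the identity already recorded in the preliminaries,
\[
\Re\!\left(\frac{e^{it}+z}{e^{it}-z}\right)=\frac{1-r^{2}}{1-2r\cos(\theta-t)+r^{2}}=P_{r}(\theta-t).
\]
Taking real parts in the definition of $f$ and moving $\Re$ under the integral (legitimate because $u$ is real-valued and the integrand is continuous in $t$) gives $\Re f(re^{i\theta})=\frac{1}{2\pi}\int_{-\pi}^{\pi}P_{r}(\theta-t)\,u(e^{it})\,dt=u(re^{i\theta})$ by the first part, so $u=\Re f$.

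The only genuinely substantive point is the analyticity of $f$; everything else reduces to quoting the Dirichlet-problem theorem, invoking the maximum principle, and using the real-part form of the Poisson kernel already established. Accordingly I expect the interchange of $\Re$ (and, for analyticity, the order of integration) with the integral to be the main technical step, though it is entirely routine given the joint continuity of the integrands on compact subsets of $\mathbb{D}$.
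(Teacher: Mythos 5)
Your proof is correct and is essentially the standard argument from the cited source (Conway): the paper itself states this corollary without proof, simply quoting \cite{CJB}, and the intended derivation is exactly what you give --- apply the preceding Dirichlet-problem theorem to $u|_{\partial\mathbb{D}}$, use the maximum principle (or the uniqueness clause) to identify $u$ with the Poisson integral, and then observe that $\Re\bigl(\frac{e^{it}+z}{e^{it}-z}\bigr)=P_{r}(\theta-t)$ to get the second assertion. No gaps; the analyticity of $f$ via Morera/differentiation under the integral is handled appropriately.
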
 
	\section{Main Results}
	In this section, we explore  self-maps of analytic function $\phi$ in $\mathbb{D}$ such that the composition operator $\mathnormal{C}_{\phi}$ is bounded on Hilbert space of complex-valued harmonic functions and vice-versa. Moreover, we prove the boundedness of composition operator $\mathnormal{C}_{\phi}$ in terms of Poisson integral and establish the relation between reproducing kernels and composition operators on $\mathnormal{H}_{h}^{2}(\mathbb{D}).$  
	\subsection{Boundedness of $\mathnormal{C}_{\phi}$ on $\mathnormal{H}_{h}^{2}(\mathbb{D})$} 
	Suppose $f=h+\overline{g} \in \mathnormal{H}_{h}^{2}(\mathbb{D})$ is of the form 
	\begin{center}
		$f(z)=\sum_{n=0}^{\infty} a_nz^{n}+\overline{\sum_{n=0}^{\infty} b_nz^{n}}, z \in \mathbb{D}.$ 
	\end{center}
	 Then for fixed $k \in \mathbb{N},$
	\begin{center}
		$f(\phi(z))=\sum_{n=0}^{\infty} a_nz^{nk}+\overline{\sum_{n=0}^{\infty} b_nz^{nk}}, z \in \mathbb{D},$ 	
	\end{center}
	is another function in $\mathnormal{H}_{h}^{2}(\mathbb{D}),$ is of the same norm as $f.$ In this case observe that,  $\phi(z)=z^{k},$ and the associated composition operator $\mathnormal{C}_{\phi}$ is given by, 
	\begin{center}
		$\mathnormal{C}_{\phi}f=\mathnormal{C}_{\phi}(h+\overline{g})=(h+\overline{g})o\phi(z), z \in \mathbb{D},$ and
		$||\mathnormal{C}_{\phi}f||_{\mathnormal{H}_{h}^{2}(\mathbb{D})}^{2}=\sum_{n=0}^{\infty}|a_{n}|^{2}+|b_{n}|^{2}=||f||_{\mathnormal{H}_{h}^{2}(\mathbb{D})}^{2},$
	\end{center}
	so that $\mathnormal{C}_{\phi}$ is an isometry on $\mathnormal{H}_{h}^{2}(\mathbb{D}).$  \\
	Now one can ask what other isometries on $\mathnormal{H}_{h}^{2}(\mathbb{D})$ are there. This can be answered in the following Theorem:  
	\begin{thm}
		An analytic function $\phi: \mathbb{D} \rightarrow \mathbb{D}$ generates a bounded composition operator $\mathnormal{C}_{\phi} : \mathnormal{H}_{h}^{2}(\mathbb{D}) \rightarrow \mathnormal{H}_{h}^{2}(\mathbb{D})$ which is an isometry provided that
		\begin{itemize}
			\item [(i).] $\phi(z)= e^{i\theta} z, \theta \in \mathbb{R},$ 
			\item [(ii).] $\phi(z)=\alpha z^{k}$ for $k \geq 1$ if and only if   $|\alpha|=1,$ and 
			\item [(iii).] $\phi(z)=\frac{a-z}{1-\bar{a}z}, \hspace{1mm} a \in \mathbb{D}$.
		\end{itemize}
	\end{thm}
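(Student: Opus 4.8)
The assertion splits into three independent cases, and the first two are settled by direct inspection of Taylor coefficients against the defining norm $\|f\|_{H_h^2(\mathbb{D})}^2=\sum_{n\geq 0}(|a_n|^2+|b_n|^2)$, whereas the third needs the integral-means representation of the norm together with the boundary behaviour of a disc automorphism. Throughout I write $f=h+\overline{g}$ with $h(z)=\sum_{n\geq 0}a_nz^n$, $g(z)=\sum_{n\geq 0}b_nz^n$, and I use the elementary observation that for \emph{any} analytic self-map $\psi$ of $\mathbb{D}$ the composite $C_\psi f=(h\circ\psi)+\overline{(g\circ\psi)}$ is again complex-valued harmonic with analytic part $h\circ\psi$ and co-analytic part $g\circ\psi$, so that $\|C_\psi f\|_{H_h^2}^2=\|h\circ\psi\|_{H^2}^2+\|g\circ\psi\|_{H^2}^2$; thus $C_\psi$ is an isometry on $H_h^2(\mathbb{D})$ exactly when $\psi$ induces an isometry of the analytic Hardy space acting separately on the two components.

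For (i), if $\phi(z)=e^{i\theta}z$ then $h\circ\phi=\sum_n a_ne^{in\theta}z^n$ and $g\circ\phi=\sum_n b_ne^{in\theta}z^n$; since $|a_ne^{in\theta}|=|a_n|$ and $|b_ne^{in\theta}|=|b_n|$ the norm is unchanged, so $C_\phi$ is an isometry. For (ii), $\phi(z)=\alpha z^k$ with $k\geq 1$ maps $\mathbb{D}$ into $\mathbb{D}$ precisely when $|\alpha|\leq 1$, and then $h\circ\phi=\sum_n a_n\alpha^nz^{nk}$, $g\circ\phi=\sum_n b_n\alpha^nz^{nk}$; because the exponents $\{nk:n\geq 0\}$ are pairwise distinct, orthonormality of the monomials gives
\[
\|C_\phi f\|_{H_h^2}^2=\sum_{n\geq 0}\bigl(|a_n|^2+|b_n|^2\bigr)|\alpha|^{2n},
\]
which equals $\|f\|_{H_h^2}^2$ for every $f\in H_h^2(\mathbb{D})$ if and only if $|\alpha|^{2n}=1$ for all $n\geq 1$, i.e.\ if and only if $|\alpha|=1$; this is the asserted equivalence, and (i) is the subcase $k=1$.

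For (iii) let $\phi(z)=\dfrac{a-z}{1-\bar{a}z}$, $a\in\mathbb{D}$, which is a disc automorphism with $\phi\circ\phi=\mathrm{id}$, hence $C_\phi$ is a well-defined involution on $H_h^2(\mathbb{D})$. The plan is to apply the integral-means representation to the complex-valued harmonic function $C_\phi f$ and let the radius tend to $1$, giving
\[
\|C_\phi f\|_{H_h^2}^2=\lim_{r\to 1^-}\frac{1}{2\pi}\int_{-\pi}^{\pi}\bigl|f(\phi(re^{i\theta}))\bigr|^2\,d\theta=\frac{1}{2\pi}\int_{-\pi}^{\pi}\bigl|f(\phi(e^{i\theta}))\bigr|^2\,d\theta,
\]
where passing to the limit is justified because $\phi$ extends to a homeomorphism of $\overline{\mathbb{D}}$ carrying $\partial\mathbb{D}$ onto $\partial\mathbb{D}$ and $h,g$ possess $L^2$ radial boundary values. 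One then performs the boundary change of variables $e^{i\psi}=\phi(e^{i\theta})$: since $\phi$ is an involution, the chain rule gives $\dfrac{d\theta}{d\psi}=\dfrac{1}{|\phi'(e^{i\theta})|}=|\phi'(e^{i\psi})|=\dfrac{1-|a|^2}{|1-\bar{a}e^{i\psi}|^2}$, which is precisely the Poisson kernel $P_{|a|}$; feeding this Jacobian back and invoking the normalisation $\frac{1}{2\pi}\int_{-\pi}^{\pi}P_r(\theta-t)\,d\theta=1$ is the step meant to return $\frac{1}{2\pi}\int_{-\pi}^{\pi}|f(e^{i\psi})|^2\,d\psi=\|f\|_{H_h^2}^2$. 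An alternative route is to test the isometry identity against the reproducing kernels $K_\alpha$, using $C_\phi^{*}K_\alpha=K_{\phi(\alpha)}$ together with $C_\phi K_\alpha=2\,\mathrm{Re}\,\dfrac{1}{1-\bar{\alpha}\phi(z)}$ and comparing $\|C_\phi K_\alpha\|_{H_h^2}^2$ with $\|K_\alpha\|_{H_h^2}^2=\dfrac{2}{1-|\alpha|^2}$. The main obstacle is exactly this last manoeuvre in case (iii): the boundary substitution $e^{i\theta}\mapsto\phi(e^{i\theta})$ is \emph{not} arc-length preserving, its density being the Poisson kernel $P_{|a|}$, so the naive change of variables by itself only controls $\|C_\phi\|$ by a constant depending on $|a|$; the real content is to argue why the weight $\dfrac{1-|a|^2}{|1-\bar{a}e^{i\theta}|^2}$ does not alter the $L^2$-norm of $f\circ\phi$ — equivalently, how the mean-value property, the relation $\phi\circ\phi=\mathrm{id}$, and the reproducing structure of $H_h^2(\mathbb{D})$ conspire to upgrade the estimate to a genuine isometry equality.
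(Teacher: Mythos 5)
Your treatments of (i) and (ii) are correct and essentially identical to the paper's own proof: both reduce to comparing Taylor coefficients of $h\circ\phi$ and $g\circ\phi$ against the defining norm $\sum_n(|a_n|^2+|b_n|^2)$, using that the exponents $nk$ are distinct. Nothing to add there.

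For (iii) you stop short of a proof and explicitly flag the obstruction — and that obstruction is precisely where the paper's own argument breaks down. The paper simply asserts that ``M\"obius transformations preserve measures on the unit disc'' and concludes $\frac{1}{2\pi}\int_{-\pi}^{\pi}|f(\phi(re^{i\theta}))|^2\,d\theta=\frac{1}{2\pi}\int_{-\pi}^{\pi}|f(re^{i\theta})|^2\,d\theta$ with no further justification. As your change of variables shows, the boundary substitution $e^{i\theta}\mapsto\phi(e^{i\theta})$ transforms normalized arc length into arc length weighted by the Poisson kernel $P_{|a|}$, so that asserted identity is false unless $a=0$. In fact the gap cannot be filled, because statement (iii) itself fails for $a\neq 0$. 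Expanding $\phi(z)=\frac{a-z}{1-\bar a z}=a-(1-|a|^2)\sum_{n\geq 1}\bar a^{\,n-1}z^n$ gives $\|\phi\|_{\mathnormal{H}_{h}^{2}(\mathbb{D})}^2=|a|^2+(1-|a|^2)=1$ and $\langle\phi,1\rangle=a$; hence for $f(z)=1+\lambda z$ with $\lambda=\bar a/|a|$ one gets $\|C_\phi f\|_{\mathnormal{H}_{h}^{2}(\mathbb{D})}^2=\|1+\lambda\phi\|^2=2+2\,\mathrm{Re}(\lambda a)=2+2|a|>2=\|f\|_{\mathnormal{H}_{h}^{2}(\mathbb{D})}^2$. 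The same conclusion follows from the paper's own Theorem 3.8, which forces $\|C_\phi\|\geq (1-|\phi(0)|^2)^{-1/2}=(1-|a|^2)^{-1/2}>1$ when $a\neq 0$, whereas an isometry has norm $1$. So do not look for a way to ``upgrade the estimate to a genuine isometry equality'': your computation of the Poisson-kernel Jacobian is the correct diagnosis, and among disc automorphisms only those fixing the origin (the rotations of part (i)) induce isometries of $\mathnormal{H}_{h}^{2}(\mathbb{D})$.
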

	\begin{proof} Throughout the proof of this theorem we consider $f=h+\overline{g} \in \mathnormal{H}_{h}^{2}(\mathbb{D}).$
		\begin{itemize}
			\item [(i).] Observe that $\phi(z)= e^{i\theta} z, \theta \in \mathbb{R}$ is a self-map of analytic function in $\mathbb{D}$ because for $z \in \mathbb{D},$ $|\phi(z)|=|e^{i\theta}z|=|z|<1.$ It follows that, 
			\begin{center}
				$\mathnormal{C}_{\phi}f=(h+\overline{g})o\phi=\sum_{n=0}^{\infty}a_n e^{in\theta}z^{n}+\overline{\sum_{n=0}^{\infty} b_n e^{in\theta}z^{n}},$ and computing the norm we get,
			\end{center}
			
			\begin{center}
				$||\mathnormal{C}_{\phi}f||_{\mathnormal{H}_{h}^{2}(\mathbb{D})}^{2}=\sum_{n=0}^{\infty} |a_n e^{in\theta}|^{2}+|b_n e^{in\theta}|^{2}=||f||_{\mathnormal{H}_{h}^{2}(\mathbb{D})}^{2}.$
			\end{center} 
			Thus, $\mathnormal{C}_{\phi}$ is an isometry on $\mathnormal{H}_{h}^{2}(\mathbb{D}).$ 
			\item [(ii).] Clearly, $\phi(z)=\alpha z^{k}$ is a self-map of $\mathbb{D}$ for $k \geq 1$. Then 
			\begin{center}
				$\mathnormal{C}_{\phi}f= (h+\overline{g})(\alpha z^{k}) =\sum_{n=0}^{\infty} a_{n}\alpha^{n} z^{kn} + \overline {\sum_{n=0}^{\infty} {b_{n}\alpha^n z^{kn}}}.$
			\end{center}
Thus computing the norm, we get			
			\begin{center}
				$||\mathnormal{C}_{\phi} f||_{\mathnormal{H}_{h}^{2}(\mathbb{D})}^{2}=\sum_{n=0}^{\infty} |a_{n}\alpha^{n}|^2 + |b_{n}\alpha^n|^2=\sum_{n=0}^{\infty} |\alpha|^{2n}(|a_n|^2 + |b_{n}|^2)$.	
			\end{center}
			Hence, $||\mathnormal{C}_{\phi} f||_{\mathnormal{H}_{h}^{2}(\mathbb{D})}^{2} = ||f||_{\mathnormal{H}_{h}^{2}(\mathbb{D})}^{2}$ if and only if $|\alpha| = 1.$ Which shows that, $\mathnormal{C}_{\phi}$ is an isometry on $\mathnormal{H}_{h}^{2}(\mathbb{D}).$
			\item [(iii).] For $\phi(z)=\frac{a-z}{1-\bar{a}z}, a \in \mathbb{D},$ observe that $|\phi(z)|=\frac{|a-z|}{|1-\bar{a}z|} < 1$ since $|a|<1$ and $|z|<1$, $|a-z|$ is finite and $|1-\bar{a}z|>0.$ Moreover, the M$\ddot{o}$bius transformation preserves the unit disc $\mathbb{D}$ onto itself. Thus, $\phi(z)$ is a self-map of $\mathbb{D}$. Again, we need to show that $\mathnormal{C}_{\phi}f \in \mathnormal{H}_{h}^{2}(\mathbb{D})$ for all $f \in \mathnormal{H}_{h}^{2}(\mathbb{D})$. By theorem (2.2), the norm of $f$ on $\mathnormal{H}_{h}^{2}(\mathbb{D})$ in terms of mean integral formula is given by 
			\begin{center}
				$||f||_{\mathnormal{H}_{h}^{2}(\mathbb{D})}^{2} =\mathnormal{M}_{2}^{2}(f,r)=\frac{1}{2\pi} \int_{-\pi}^{\pi}|f(re^{i\theta})|^2d \theta$. 
			\end{center}
			This implies
			\begin{center}
				$||\mathnormal{C}_{\phi}f||_{\mathnormal{H}_{h}^{2}(\mathbb{D})}^{2} =\mathnormal{M}_{2}^{2}(fo\phi,r)=\frac{1}{2\pi} \int_{-\pi}^{\pi}|f(\phi(re^{i\theta}))|^2d \theta$. 
			\end{center} 
			Again, since $f$ is harmonic and $\phi(z)$ is analytic in $\mathbb{D}$ their composition $f(\phi(z))$ is also harmonic in $\mathbb{D}$.
			By the substitution formula for Hilbert space of complex-valued harmonic functions norms and the fact that Mobius transformations preserves measures on the unit disc, it is known that 
			\begin{center}
				$||fo\phi||_{\mathnormal{H}_{h}^{2}(\mathbb{D})}^{2} =\mathnormal{M}_{2}^{2}(f,r)=\frac{1}{2\pi} \int_{-\pi}^{\pi}|f(re^{i\theta})|^2d \theta$.
			\end{center}
	This follows because the integral of $|f(\phi(re^{i\theta}))|^2$ with respect to $\theta$ remain unchanged due to the invariance property of M$\ddot{o}$bius transformations. So,  
			\begin{center}
				$\frac{1}{2\pi} \int_{-\pi}^{\pi}|f(\phi(re^{i\theta}))|^2 d \theta =\frac{1}{2\pi} \int_{-\pi}^{\pi}|f(re^{i\theta})|^2d \theta$.	
			\end{center}
Thus, $\mathnormal{C}_{\phi}f \in \mathnormal{H}_{h}^{2}(\mathbb{D})$ and 
			\begin{center}
$||\mathnormal{C}_{\phi}f||_{\mathnormal{H}_{h}^{2}(\mathbb{D})} = ||f||_{\mathnormal{H}_{h}^{2}(\mathbb{D})}.$ 
			\end{center}
This implies that $\mathnormal{C}_{\phi}$ is bounded with $||\mathnormal{C}_{\phi}||_{\mathnormal{H}_{h}^{2}(\mathbb{D})}=1,$ so that it is an isometry.
		\end{itemize}
	\end{proof}
	\begin{thm}
		Suppose $\phi$ is an analytic function on $\mathbb{D}$ and $\mathnormal{C}_{\phi}$ is a bounded composition operator on $\mathnormal{H}_{h}^{2}(\mathbb{D}).$ Then $\phi$ is a self-map on $\mathbb{D}$.
	\end{thm}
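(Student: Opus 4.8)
The plan is to argue by contradiction via the standard ``power trick'': feed the monomials into $\mathnormal{C}_{\phi}$ and control the resulting point evaluations using the reproducing kernels of Theorem~2.3.

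First I would record a pointwise estimate valid on all of $\mathnormal{H}_{h}^{2}(\mathbb{D})$. For $\alpha\in\mathbb{D}$, applying the reproducing property to $f=\mathnormal{K}_{\alpha}$ gives $\|\mathnormal{K}_{\alpha}\|_{\mathnormal{H}_{h}^{2}(\mathbb{D})}^{2}=\langle \mathnormal{K}_{\alpha},\mathnormal{K}_{\alpha}\rangle=\mathnormal{K}_{\alpha}(\alpha)=\frac{2}{1-|\alpha|^{2}}$, so Cauchy--Schwarz yields
\[
|f(\alpha)|=\bigl|\langle f,\mathnormal{K}_{\alpha}\rangle\bigr|\le\left(\frac{2}{1-|\alpha|^{2}}\right)^{1/2}\|f\|_{\mathnormal{H}_{h}^{2}(\mathbb{D})}\qquad\text{for every }f\in\mathnormal{H}_{h}^{2}(\mathbb{D}).
\]
Next, note that each monomial $e_{k}(z)=z^{k}$ lies in $\mathnormal{H}_{h}^{2}(\mathbb{D})$ with $\|e_{k}\|_{\mathnormal{H}_{h}^{2}(\mathbb{D})}=1$, and that $\mathnormal{C}_{\phi}e_{k}=e_{k}\circ\phi=\phi^{k}$ is a well-defined analytic function on $\mathbb{D}$ (a polynomial composed with the analytic map $\phi$), regardless of the range of $\phi$. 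Since $\mathnormal{C}_{\phi}$ is bounded, $\phi^{k}\in\mathnormal{H}_{h}^{2}(\mathbb{D})$ and $\|\phi^{k}\|_{\mathnormal{H}_{h}^{2}(\mathbb{D})}\le\|\mathnormal{C}_{\phi}\|$ for every $k\in\mathbb{N}$.

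Now fix an arbitrary $z_{0}\in\mathbb{D}$ and apply the pointwise estimate to $f=\phi^{k}$ at $\alpha=z_{0}$:
\[
|\phi(z_{0})|^{k}=\bigl|\phi^{k}(z_{0})\bigr|\le\left(\frac{2}{1-|z_{0}|^{2}}\right)^{1/2}\|\phi^{k}\|_{\mathnormal{H}_{h}^{2}(\mathbb{D})}\le\left(\frac{2}{1-|z_{0}|^{2}}\right)^{1/2}\|\mathnormal{C}_{\phi}\|.
\]
The right-hand side is independent of $k$, so letting $k\to\infty$ forces $|\phi(z_{0})|\le1$; since $z_{0}\in\mathbb{D}$ was arbitrary, $|\phi(z)|\le1$ for all $z\in\mathbb{D}$. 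To upgrade this to the strict inequality required by ``self-map of $\mathbb{D}$'', suppose $|\phi(z_{1})|=1$ for some $z_{1}\in\mathbb{D}$; by the maximum modulus principle $\phi$ is then a constant $c$ with $|c|=1$, and $\mathnormal{C}_{\phi}$ would send the polynomials $p_{N}(z)=\sum_{n=1}^{N}\bar{c}^{\,n}z^{n}/n$, which satisfy $\|p_{N}\|_{\mathnormal{H}_{h}^{2}(\mathbb{D})}^{2}=\sum_{n=1}^{N}1/n^{2}\le\pi^{2}/6$, to the constant functions with value $p_{N}(c)=\sum_{n=1}^{N}1/n$, whose $\mathnormal{H}_{h}^{2}(\mathbb{D})$-norms grow without bound; this contradicts boundedness. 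Hence $|\phi(z)|<1$ for all $z\in\mathbb{D}$, i.e.\ $\phi(\mathbb{D})\subseteq\mathbb{D}$.

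I expect the only points requiring real care to be (i) legitimately regarding the composed functions $\phi^{k}$ as honest elements of $\mathnormal{H}_{h}^{2}(\mathbb{D})$, which is precisely where the boundedness hypothesis is used, and (ii) the short argument excluding $|\phi|\equiv1$; the remainder is the classical power-trick computation combined with the reproducing-kernel estimate. Alternatively, one could replace the kernel estimate by the integral-means identity of Theorem~2.2 together with the subharmonicity of $z\mapsto|\phi(z)|^{2k}$, but the kernel route is shorter.
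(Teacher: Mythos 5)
Your argument is correct, and it takes a genuinely different route from the paper's. The paper also works by contradiction with the reproducing kernels, but it uses them as the \emph{test functions}: assuming $|\phi(z_0)|\ge 1$ for some $z_0$, it applies $\mathnormal{C}_{\phi}$ to $\mathnormal{K}_{\alpha}$ and asserts that $\|\mathnormal{C}_{\phi}\mathnormal{K}_{\alpha}\|$ blows up as $\alpha$ approaches $1/\overline{\phi(z_0)}$. You instead use the kernels only for the standard evaluation bound $|f(\alpha)|\le\bigl(2/(1-|\alpha|^{2})\bigr)^{1/2}\|f\|$ and feed the unit-norm monomials $z^{k}$ into $\mathnormal{C}_{\phi}$, so that $|\phi(z_0)|^{k}$ is bounded uniformly in $k$, forcing $|\phi(z_0)|\le 1$; the unimodular-constant case is then killed by an explicit bounded family of polynomials with unbounded images. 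Your version buys two things the paper's argument does not clearly deliver: first, $e_{k}\circ\phi=\phi^{k}$ is defined regardless of the range of $\phi$, whereas $\mathnormal{K}_{\alpha}\circ\phi$ need not even make sense where $|\phi|\ge 1/|\alpha|$, a point the paper passes over; second, the paper's limiting step needs $\phi(\alpha)$ to approach $\phi(z_0)$ as $\alpha\to 1/\overline{\phi(z_0)}$, which does not follow from anything stated (and that limit point sits on $\partial\mathbb{D}$ when $|\phi(z_0)|=1$, so the relevant kernels degenerate). Your power trick plus the maximum-modulus step cleanly yields $|\phi(z)|<1$ for all $z\in\mathbb{D}$, which is exactly the conclusion required.
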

	\begin{proof}
Assume $\mathnormal{C}_{\phi}:\mathnormal{H}_{h}^{2}(\mathbb{D}) \rightarrow \mathnormal{H}_{h}^{2}(\mathbb{D})$ is bounded. Then 		
$||\mathnormal{C}_{\phi}f||_{\mathnormal{H}_{h}^{2}(\mathbb{D})} \leq k  ||f||_{\mathnormal{H}_{h}^{2}(\mathbb{D})}$ for some $ k>0.$ Suppose  $\phi$ is not a self-map on $\mathbb{D}.$ Then there is $z_{0} \in \mathbb{D}$ such that $|\phi(z_{0})| \geq 1.$ By theorem (2.3), the reproducing kernel function is given by 
\begin{center}
	$K_{\alpha}(z)=\frac{1}{1-\overline{\alpha}z}+\frac{1}{1-\alpha \overline{z}},$ for $\alpha \in \mathbb{D}.$
\end{center}  
The family $\lbrace K_{\alpha} \rbrace_{\alpha \in \mathbb{D}}$ forms a dense subset in $\mathnormal{H}_{h}^{2}(\mathbb{D}),$ and their norms satisfy 
\begin{center}
	$||K_{\alpha}||_{\mathnormal{H}_{h}^{2}(\mathbb{D})}=\frac{\sqrt{2}}{\sqrt{1-|\alpha|^{2}}}.$
\end{center}
Now applying $\mathnormal{C}_{\phi}$ to $\mathnormal{K}_{\alpha},$ we obtain 
\begin{center}
	$\mathnormal{C}_{\phi}\mathnormal{K}_{\alpha}(z)=\mathnormal{K}_{\alpha}(\phi(z))=\frac{1}{1-\overline{\alpha} \phi(z)}+\frac{1}{1-\alpha \overline{\phi(z)}},$
\end{center}
and 
\begin{center}
	$||\mathnormal{C}_{\phi}\mathnormal{K}_{\alpha}(z)||_{\mathnormal{H}_{h}^{2}(\mathbb{D})}=||\mathnormal{K}_{\alpha}(\phi(z))||_{\mathnormal{H}_{h}^{2}(\mathbb{D})}=(\frac{2(1-\Re(\alpha \overline{\phi(\alpha)}))}{|1-\overline{\alpha} \phi(\alpha)|^{2}})^{\frac{1}{2}}.$
\end{center} 
If $\phi(z_{0})$ satisfies $|\phi(z_{0})| \geq 1,$ then for $\alpha$ close to $\frac{1}{\overline{\phi(z_{0})}},$ the denominator $1-\overline{\alpha} \phi(\alpha)$ approaches zero for $\alpha \in \mathbb{D},$ causing the norm unbounded. This contradicts the boundedness of $\mathnormal{C}_{\phi},$ proving that  $\phi(\mathbb{D}) \subset (\mathbb{D}).$
	\end{proof}
	Although our main focus is to study the boundedness of composition operator $\mathnormal{C}_{\phi}$ on $\mathnormal{H}_{h}^{2}(\mathbb{D}),$ but we are also interested to give an example of the type of self-map $\phi$ in which $\mathnormal{C}_{\phi}$ is non-isometry.
	\begin{Example}
		Suppose $\phi(z)= az+b$, $|a| \leq 1$ and $|b| \leq 1-|a|.$ If $f=h+\overline{g}$ is in $\mathnormal{H}_{h}^{2}(\mathbb{D}),$ then $\mathnormal{C}_{\phi}f$ is not in $\mathnormal{H}_{h}^{2}(\mathbb{D}),$ and in fact, $||\mathnormal{C}_{\phi}f||_{\mathnormal{H}_{h}^{2}(\mathbb{D})} \neq ||f||_{\mathnormal{H}_{h}^{2}(\mathbb{D})}.$ Therefore, $\mathnormal{C}_{\phi}$ is not an isometry mapping $\mathnormal{H}_{h}^{2}(\mathbb{D})$ into itself. Clearly, $\phi$ is a self-map of analytic function in $\mathbb{D},$ and one can easily show by using Binomial expansion that $\mathnormal{C}_{\phi}$ is not an isometry on  $\mathnormal{H}_{h}^{2}(\mathbb{D})$ as the norm of $\mathnormal{C}_{\phi}f$ is not the same as norm of $f.$
			\end{Example}		  
	
	\subsection{Boundedness of $\mathnormal{C}_{\phi}$ in terms of Poisson Integral on $ {\mathnormal{H}_{h}^{2}(\mathbb{D})}$}
	Now, we characterize the boundedness of composition operator $\mathnormal{C}_{\phi}$ in terms of Poisson integral, and we also establish the relationship between reproducing kernels and composition operators on
	Hilbert space of complex-valued harmonic functions in the unit disc.
	\begin{definition}
		Suppose $f \in \mathnormal{H}_{h}^{2}(\mathbb{D})$. Then the poisson integral formula for $re^{it} \in \mathbb{D}$, is defined by
		\begin{center}
			$f(re^{it})=h(re^{it})+\overline{g(re^{it})} = \frac{1}{2\pi} \int_{0}^{2\pi} P_{r}(\theta -t)h(e^{i\theta}) d\theta+\frac{1}{2\pi} \int_{0}^{2\pi} P_{r}(\theta -t)\overline{g(e^{i\theta})} d\theta$.
		\end{center} 	
	\end{definition}	
	\begin{lem}
		Suppose $f \in \mathnormal{H}_{h}^{2}(\mathbb{D}).$ Then, for $re^{it} \in \mathbb{D}$, we have 
		\begin{center}
			$|f(re^{it})|^{2} \leq \frac{1}{2\pi} \int_{0}^{2\pi} P_{r}(\theta -t)|h(e^{i\theta})|^{2} d\theta + \frac{1}{2\pi} \int_{0}^{2\pi} P_{r}(\theta -t)|g(e^{i\theta})|^{2} d\theta $
		\end{center} 
		$~~~~~~~~~~~~~~~~~~~~~~~+ 2 (\frac{1}{2\pi} \int_{0}^{2\pi} P_{r}(\theta -t)|h(e^{i\theta})|^{2}d\theta)^{\frac{1}{2}}( \frac{1}{2\pi} \int_{0}^{2\pi} P_{r}(\theta -t)|g(e^{i\theta})|^{2}d\theta)^{\frac{1}{2}}.$
	\end{lem}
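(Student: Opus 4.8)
The plan is to start from the Poisson integral representation of $f$ supplied by Definition 3.4 and estimate $|f(re^{it})|^{2}$ in three pieces, using the key fact (Theorem 2.6) that $P_{r}(\theta-t)\ge 0$ and $\frac{1}{2\pi}\int_{0}^{2\pi}P_{r}(\theta-t)\,d\theta=1$, so that $d\mu_{t}(\theta):=\frac{1}{2\pi}P_{r}(\theta-t)\,d\theta$ is a probability measure on $[0,2\pi]$. Since $f=h+\overline{g}\in\mathnormal{H}_{h}^{2}(\mathbb{D})$, the analytic functions $h$ and $g$ have boundary values $h(e^{i\theta}),g(e^{i\theta})$ lying in $L^{2}$ of the circle, so all the integrals below are finite and the representation of Definition 3.4 applies.

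First I would write $f(re^{it})=h(re^{it})+\overline{g(re^{it})}$ and note that, since $|\overline{g(re^{it})}|=|g(re^{it})|$, the triangle inequality gives $|f(re^{it})|\le |h(re^{it})|+|g(re^{it})|$, hence
\[
|f(re^{it})|^{2}\le |h(re^{it})|^{2}+2\,|h(re^{it})|\,|g(re^{it})|+|g(re^{it})|^{2}.
\]
Next I would bound $|h(re^{it})|^{2}$ and $|g(re^{it})|^{2}$. By Definition 3.4, $h(re^{it})=\int_{0}^{2\pi}h(e^{i\theta})\,d\mu_{t}(\theta)$, so Jensen's inequality (equivalently, the Cauchy--Schwarz inequality applied to $h(e^{i\theta})\cdot 1$ against the probability measure $\mu_{t}$) yields
\[
|h(re^{it})|^{2}\le \int_{0}^{2\pi}|h(e^{i\theta})|^{2}\,d\mu_{t}(\theta)=\frac{1}{2\pi}\int_{0}^{2\pi}P_{r}(\theta-t)|h(e^{i\theta})|^{2}\,d\theta,
\]
and likewise $|g(re^{it})|^{2}\le \frac{1}{2\pi}\int_{0}^{2\pi}P_{r}(\theta-t)|g(e^{i\theta})|^{2}\,d\theta$.

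Substituting these two estimates into the first and third terms of the displayed expansion produces exactly the first line of the claimed inequality. For the middle term, multiplying the two bounds and taking square roots gives
\[
2\,|h(re^{it})|\,|g(re^{it})|\le 2\Bigl(\tfrac{1}{2\pi}\int_{0}^{2\pi}P_{r}(\theta-t)|h(e^{i\theta})|^{2}\,d\theta\Bigr)^{\frac12}\Bigl(\tfrac{1}{2\pi}\int_{0}^{2\pi}P_{r}(\theta-t)|g(e^{i\theta})|^{2}\,d\theta\Bigr)^{\frac12},
\]
which is precisely the second line. Combining the three pieces finishes the proof. There is no serious obstacle here; the only points requiring care are recording that $|\overline{g}|=|g|$ so the cross term is genuinely controlled, and justifying the use of Jensen/Cauchy--Schwarz, which is legitimate exactly because Theorem 2.6 guarantees that $\frac{1}{2\pi}P_{r}(\theta-t)\,d\theta$ is nonnegative with total mass one.
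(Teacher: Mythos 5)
Your proposal is correct and follows essentially the same route as the paper: both use the Poisson representation, the fact that $\frac{1}{2\pi}P_{r}(\theta-t)\,d\theta$ is a probability measure, the triangle inequality, and Cauchy--Schwarz against the constant function $1$. The only cosmetic difference is the order of operations --- you square $|h(re^{it})|+|g(re^{it})|$ first and then estimate each piece, while the paper first bounds $|f(re^{it})|$ by the sum of the two square-rooted integrals and squares at the end --- and these yield the identical final expression.
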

	\begin{proof}
		By definition 3.4., we have
		\begin{center}
			$f(re^{it}) = \frac{1}{2\pi} \int_{0}^{2\pi} P_{r}(\theta -t)h(e^{i\theta}) d\theta+\frac{1}{2\pi} \int_{0}^{2\pi} P_{r}(\theta-t)\overline{g(e^{i\theta})} d\theta$.
		\end{center} 
		If we define the Lebesgue measure $d\mu$ by $d\mu(\theta)=\frac{1}{2\pi}P_r(\theta - t) d \theta $, then the above formula becomes,
		\begin{center}
			$f(re^{it}) = \int_{0}^{2\pi} h(e^{i\theta}) d\mu(\theta)+ \int_{0}^{2\pi} \overline{g(e^{i\theta})} d\mu(\theta)$.
		\end{center}   	
		Notice that $h(e^{i\theta})+\overline{g(e^{i\theta})} \in \mathnormal{L}^{2}(\mathnormal{S}^{1},d\mu)$, since for fixed $r<1$, $P_r(\theta-t)$ is bounded above. Applying the triangle inequality and Cauchy-schwarz inequality to the product of the functions $h(e^{i\theta})\in \mathnormal{L}^{2}(\mathnormal{S}^{1},d\mu)$, $\overline{g(e^{i\theta})} \in \mathnormal{L}^{2}(\mathnormal{S}^{1},d\mu)$, and $1 \in \mathnormal{L}^{2}(\mathnormal{S}^{1},d\mu)$, we obtain 
	\newline 
$|f(re^{it})|=|\int_{0}^{2\pi} h(e^{i\theta}) d\mu(\theta) + \int_{0}^{2\pi} \overline{g(e^{i\theta})} d\mu(\theta)| \leq |\int_{0}^{2\pi} h(e^{i\theta}) d\mu(\theta)| + |\int_{0}^{2\pi} \overline{g(e^{i\theta})} d\mu(\theta) |$	
\newline
$~~~~~~~~~~~\leq (\int_{0}^{2\pi} |h(e^{i\theta})|^{2} d\mu(\theta))^{\frac{1}{2}}(\int_{0}^{2\pi}|1|^2 d\mu(\theta))^{\frac{1}{2}} + (\int_{0}^{2\pi} |\overline{g(e^{i\theta})}|^{2} d\mu(\theta))^{\frac{1}{2}}(\int_{0}^{2\pi}|1|^2 d\mu(\theta))^{\frac{1}{2}}$	
\newline
$~~~~~~~~~~~=(\int_{0}^{2\pi} |h(e^{i\theta})|^{2} d\mu(\theta))^{\frac{1}{2}} + (\int_{0}^{2\pi} |g(e^{i\theta})|^{2} d\mu(\theta))^{\frac{1}{2}}$	
		\newline	
since $\frac{1}{2\pi} \int_{0}^{2\pi} P_r(\theta -t) d\theta=1.$ Squaring both sides of the above inequality yields 
		\newline \
	$|f(re^{it})|^{2} \leq [(\int_{0}^{2\pi} |h(e^{i\theta})|^{2} d\mu(\theta))^{\frac{1}{2}} + (\int_{0}^{2\pi} |g(e^{i\theta})|^{2} d\mu(\theta))^{\frac{1}{2}}]^{2}$ \newline
	$~~~~~~~~~~~~~=\int_{0}^{2\pi} |h(e^{i\theta})|^{2} d\mu(\theta) + \int_{0}^{2\pi} |g(e^{i\theta})|^{2} d\mu(\theta)+2(\int_{0}^{2\pi} |h(e^{i\theta})|^{2} d\mu(\theta))^{\frac{1}{2}}(\int_{0}^{2\pi} |g(e^{i\theta})|^{2} d\mu(\theta))^{\frac{1}{2}}$ \newline
$~~~~~~~~~~~~~=\frac{1}{2\pi} \int_{0}^{2\pi} P_r(\theta-t)|h(e^{i\theta})|^{2} d\theta + \int_{0}^{2\pi} P_r(\theta-t)|g(e^{i\theta})|^{2} d\theta \newline ~~~~~~~~~~~~~~+2(\frac{1}{2\pi}\int_{0}^{2\pi} P_r(\theta-t)|h(e^{i\theta})|^{2} d\theta)^{\frac{1}{2}}(\frac{1}{2\pi}\int_{0}^{2\pi}P_r(\theta-t) |g(e^{i\theta})|^{2} d\theta)^{\frac{1}{2}}.$
	\end{proof}
	We can now prove that $\mathnormal{C}_{\phi}$ is well-defined and bounded operator on $\mathnormal{H}_{h}^{2}(\mathbb{D})$. 
	\begin{thm}
		Let $\phi:\mathbb{D} \rightarrow \mathbb{D}$ be analytic. Then the composition operator $\mathnormal{C}_{\phi}$ is well-defined and bounded on $\mathnormal{H}_{h}^{2}(\mathbb{D})$. Moreover, 
		\begin{center}
			$||\mathnormal{C}_{\phi}||_{\mathnormal{H}_{h}^{2}(\mathbb{D})} \leq 2\sqrt{\frac{1+|\phi(0)|}{1-|\phi(0)|}}$.
		\end{center}
	\end{thm}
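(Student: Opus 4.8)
\emph{Proof strategy.} The plan is to reduce the boundedness of $C_\phi$ to a uniform-in-$r$ estimate for the integral means $M_2^2(C_\phi f,r)$ of Theorem 2.2, and to obtain that estimate from the pointwise bound of Lemma 3.5. Write $f=h+\overline{g}\in H_h^2(\mathbb{D})$ with $h(z)=\sum_{n\ge0}a_nz^n$ and $g(z)=\sum_{n\ge0}b_nz^n$ analytic; then $h,g\in H^2(\mathbb{D})$ and, from the coefficient description of the norm, $\|h\|_{H^2}^2=\sum|a_n|^2\le\|f\|_{H_h^2}^2$ and $\|g\|_{H^2}^2=\sum|b_n|^2\le\|f\|_{H_h^2}^2$. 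Since $\phi$ is an analytic self-map of $\mathbb{D}$, $C_\phi f=(h\circ\phi)+\overline{(g\circ\phi)}$ is a complex-valued harmonic function on $\mathbb{D}$ of the required form (analytic plus conjugate of analytic), so by Theorem 2.2 it suffices to show that $M_2^2(C_\phi f,r)=\frac{1}{2\pi}\int_0^{2\pi}|f(\phi(re^{it}))|^2\,dt$ is bounded, uniformly in $0\le r<1$, by a constant multiple of $\|f\|_{H_h^2}^2$.

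First I would fix $r$ and $t$ and apply Lemma 3.5 at the interior point $w=\phi(re^{it})\in\mathbb{D}$. Letting $U$ and $V$ denote the Poisson integrals on $\mathbb{D}$ of the boundary functions $|h(e^{i\theta})|^2$ and $|g(e^{i\theta})|^2$ (these lie in $L^1(\partial\mathbb{D})$ because $h,g\in H^2$, so $U,V$ are nonnegative harmonic functions on $\mathbb{D}$ in the sense of Theorem 2.6 and Corollary 2.7), Lemma 3.5 reads $|f(w)|^2\le U(w)+V(w)+2\sqrt{U(w)}\sqrt{V(w)}=(\sqrt{U(w)}+\sqrt{V(w)})^2\le 2U(w)+2V(w)$, hence $|f(\phi(re^{it}))|^2\le 2(U\circ\phi)(re^{it})+2(V\circ\phi)(re^{it})$. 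Because $U,V$ are harmonic on $\mathbb{D}$ and $\phi$ is analytic, $U\circ\phi$ and $V\circ\phi$ are harmonic on $\mathbb{D}$, so integrating in $t$ and invoking the Mean-value Theorem (Theorem 2.4) on the circle $|z|=r$ gives $M_2^2(C_\phi f,r)\le 2(U\circ\phi)(0)+2(V\circ\phi)(0)=2U(\phi(0))+2V(\phi(0))$, a quantity independent of $r$. Finally, bounding the Poisson kernel at $\phi(0)$ by its maximum, $\frac{1-|\phi(0)|^2}{|e^{i\theta}-\phi(0)|^2}\le\frac{1-|\phi(0)|^2}{(1-|\phi(0)|)^2}=\frac{1+|\phi(0)|}{1-|\phi(0)|}$, yields $U(\phi(0))\le\frac{1+|\phi(0)|}{1-|\phi(0)|}\|h\|_{H^2}^2\le\frac{1+|\phi(0)|}{1-|\phi(0)|}\|f\|_{H_h^2}^2$ and likewise for $V$; adding the two estimates and taking the supremum over $r$ gives $\|C_\phi f\|_{H_h^2}^2\le 4\,\frac{1+|\phi(0)|}{1-|\phi(0)|}\|f\|_{H_h^2}^2$, which simultaneously shows $C_\phi f\in H_h^2(\mathbb{D})$ (well-definedness) and $\|C_\phi\|_{H_h^2(\mathbb{D})}\le 2\sqrt{\frac{1+|\phi(0)|}{1-|\phi(0)|}}$.

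The step I expect to require the most care is the middle one: one must be sure that $U$ and $V$ really are harmonic on the whole open disc — the Poisson integral of an $L^1$ boundary function is harmonic in $\mathbb{D}$, which is the natural extension of Theorem 2.6 and its corollary from continuous to integrable data and should be recorded or cited — so that $U\circ\phi$ and $V\circ\phi$ are legitimately harmonic and the Mean-value Theorem applies on every circle $|z|=r<1$; integrating the pointwise inequality is then immediate since all integrands are nonnegative and measurable. A second, minor point is that Theorem 2.2 is being invoked for $C_\phi f$, whose membership in $H_h^2(\mathbb{D})$ is part of what is claimed; this is handled in the usual way by first establishing $\sup_{0\le r<1}M_2^2(C_\phi f,r)<\infty$ and only afterwards concluding $C_\phi f\in H_h^2(\mathbb{D})$. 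I would also remark that the cross-term in Lemma 3.5 costs the factor $2$ in the final bound, which a more direct argument — applying the known Hardy-space estimate $\|C_\phi u\|_{H^2}\le(\tfrac{1+|\phi(0)|}{1-|\phi(0)|})^{1/2}\|u\|_{H^2}$ separately to the analytic parts $h$ and $g$ and using $\|C_\phi f\|_{H_h^2}^2=\|h\circ\phi\|_{H^2}^2+\|g\circ\phi\|_{H^2}^2$ — would avoid, but the statement as written only asks for the constant $2$.
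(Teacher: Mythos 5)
Your proposal is correct and follows essentially the same route as the paper's proof: the pointwise bound of Lemma 3.5 via the Poisson integrals $u,v$ of $|h|^2,|g|^2$, harmonicity of $u\circ\phi$ and $v\circ\phi$ plus the Mean-value Theorem to evaluate at $\phi(0)$, and the sup bound $P_r\le\frac{1+r}{1-r}$ on the Poisson kernel. The only (cosmetic) difference is that you absorb the cross term pointwise via $(\sqrt{U}+\sqrt{V})^2\le 2(U+V)$, while the paper carries it through the integration and applies Cauchy--Schwarz there; both land on the same constant $2\sqrt{\tfrac{1+|\phi(0)|}{1-|\phi(0)|}}$, and your closing remarks about harmonicity of the Poisson integral of $L^1$ data and about the sharper constant are sound.
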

	\begin{proof}
		The expression 
		\begin{center}
			$P_{r}(\theta-t)=\frac{1-r^{2}}{1+2r cos(\theta -t)+r^{2}} \geq 0$
		\end{center} is the Poisson kernel in the unit disc. We consider real-valued functions $u$ and $v$ on $\mathbb{D}$ defined respectively by
		\begin{center}
			$u(re^{it})=\frac{1}{2\pi} \int_{0}^{2\pi} P_{r}(\theta -t)|h(e^{i\theta})|^{2} d\theta$ and $v(re^{it})=\frac{1}{2\pi} \int_{0}^{2\pi} P_{r}(\theta -t)|g(e^{i\theta})|^{2} d\theta.$
		\end{center} 
		These are harmonic functions on $\mathbb{D},$ and, by lemma (3.5), 
		\begin{center}
			$|f(re^{it})|^2 \leq u(re^{it})+v(re^{it})+2\sqrt{u(re^{it})}\sqrt{v(re^{it})},$
		\end{center} for all $re^{it} \in \mathbb{D}.$
		Since the range of $\phi$ is in $\mathbb{D}$, by the inequality above, 
		\begin{center}
			$|f(\phi(re^{it}))|^2 \leq u(\phi(re^{it}))+v(\phi(re^{it}))+2\sqrt{u(\phi(re^{it}))}\sqrt{v(\phi(re^{it}))}.$
		\end{center}
Now, multiplying both sides of this inequality by $\frac{1}{2\pi}$, integrating from $0$ to $2\pi$ and applying the Cauchy-Schwartz inequality, we obtain 
\newline 	
	
$\frac{1}{2\pi} \int_{0}^{2\pi}|f(\phi(re^{it}))|^2dt \newline ~~~~~~~~~~~ \leq \frac{1}{2\pi} \int_{0}^{2\pi}u(\phi(re^{it}))dt+\frac{1}{2\pi} \int_{0}^{2\pi}v(\phi(re^{it}))dt+2(\frac{1}{2\pi} \int_{0}^{2\pi}\sqrt{u(\phi(re^{it}))}\sqrt{v(\phi(re^{it}))}dt)$
\newline 
$~~~~~~~~~~~~~\leq \frac{1}{2\pi} \int_{0}^{2\pi}u(\phi(re^{it}))dt+\frac{1}{2\pi} \int_{0}^{2\pi}v(\phi(re^{it}))dt+2\sqrt{\frac{1}{2\pi} \int_{0}^{2\pi}u(\phi(re^{it}))dt}\sqrt{\frac{1}{2\pi} \int_{0}^{2\pi}v(\phi(re^{it}))dt}$ \newline 
	
Since $u$ and $v$ are harmonic and $\phi$ is analytic, it follows that $u$ o $ \phi$ and $v$ o $\phi$ are also harmonic. Hence, by the mean value property of harmonic functions
		\begin{center}
$u(\phi(0))=\frac{1}{2\pi} \int_{0}^{2\pi}u(\phi(re^{it}))dt$ and  $v(\phi(0))=\frac{1}{2\pi} \int_{0}^{2\pi}v(\phi(re^{it}))dt.$
		\end{center}
Therefore 
		\begin{center}
			$\frac{1}{2\pi} \int_{0}^{2\pi}|f(\phi(re^{it}))|^2dt \leq u(\phi(0))+v(\phi(0))+2\sqrt{u(\phi(0))}\sqrt{v(\phi(0))}$	
		\end{center}
		Thus, $f$ o $\phi$ is in $\mathnormal{H}_{h}^{2}(\mathbb{D})$ and $\mathnormal{C}_{\phi}$ is well-defined. 
		To check that $\mathnormal{C}_{\phi}$ is bounded, observe that the last inequality implies that 
		\begin{center}
			$||\mathnormal{C}_{\phi}f||_{\mathnormal{H}_{h}^{2}(\mathbb{D})}^{2} \leq u(\phi(0))+v(\phi(0))+2\sqrt{u(\phi(0))}\sqrt{v(\phi(0))}$.
		\end{center}
		Notice also that 
		\begin{center}
			$P_{r}(\theta -t)=\frac{1-r^2}{1-2rcos(\theta -t)+r^2} \leq \frac{1-r^2}{(1-r)^2}=\frac{1+r}{1-r}$.
		\end{center}
		Since 
		\begin{center}
			${u(re^{it})}=\frac{1}{2\pi} \int_{0}^{2\pi} P_{r}(\theta -t)|h(e^{i\theta})|^2 d\theta$ and ${v(re^{it})}=\frac{1}{2\pi} \int_{0}^{2\pi} P_{r}(\theta -t)|g(e^{i\theta})|^{2} d\theta$,  		
		\end{center}
		it follows that 
		\begin{center}
			${u(re^{it})} \leq (\frac{1+r}{1-r})\frac{1}{2\pi} \int_{0}^{2\pi} |h(e^{i\theta})|^2 d\theta =(\frac{1+r}{1-r}) ||h||_{\mathnormal{H}_{h}^{2}(\mathbb{D})}^{2} $, 
		\end{center} 
		and 
		\begin{center}
			${v(re^{it})} \leq (\frac{1+r}{1-r})\frac{1}{2\pi} \int_{0}^{2\pi} |g(e^{i\theta})|^2 d\theta =(\frac{1+r}{1-r})||g||_{\mathnormal{H}_{h}^{2}(\mathbb{D})}^{2} $, 
		\end{center}
		in other words, 
		\begin{center}
			$u(z) \leq (\frac{1+|z|}{1-|z|})||h||_{\mathnormal{H}_{h}^{2}(\mathbb{D})}^{2}$ and $v(z) \leq (\frac{1+|z|}{1-|z|})||g||_{\mathnormal{H}_{h}^{2}(\mathbb{D})}^{2}$
		\end{center}
		for every $z \in \mathbb{D}$. 
		In particular, 
		\begin{center}
			$u(\phi(0)) \leq (\frac{1+|\phi(0)|}{1-|\phi(0)|})||h||_{\mathnormal{H}_{h}^{2}(\mathbb{D})}^{2}$ and $v(\phi(0)) \leq (\frac{1+|\phi(0)|}{1-|\phi(0)|})||g||_{\mathnormal{H}_{h}^{2}(\mathbb{D})}^{2}.$
		\end{center}
		Hence, \begin{center}
			$||\mathnormal{C}_{\phi}f||_{\mathnormal{H}_{h}^{2}(\mathbb{D})}^{2} \leq (\frac{1+|\phi(0)|}{1-|\phi(0)|})||h||_{\mathnormal{H}_{h}^{2}(\mathbb{D})}^{2} + (\frac{1+|\phi(0)|}{1-|\phi(0)|})||g||_{\mathnormal{H}_{h}^{2}(\mathbb{D})}^{2} + 2 \sqrt{(\frac{1+|\phi(0)|}{1-|\phi(0)|})||h||_{\mathnormal{H}_{h}^{2}(\mathbb{D})}^{2}}\sqrt{(\frac{1+|\phi(0)|}{1-|\phi(0)|})||g||_{\mathnormal{H}_{h}^{2}(\mathbb{D})}^{2}} $.
		\end{center}
		\begin{center}
			$=(\frac{1+|\phi(0)|}{1-|\phi(0)|})||h||_{\mathnormal{H}_{h}^{2}(\mathbb{D})}^{2} + (\frac{1+|\phi(0)|}{1-|\phi(0)|})||g||_{\mathnormal{H}_{h}^{2}(\mathbb{D})}^{2} + 2 {(\frac{1+|\phi(0)|}{1-|\phi(0)|})}||h||_{\mathnormal{H}_{h}^{2}(\mathbb{D})}||g||_{\mathnormal{H}_{h}^{2}(\mathbb{D})}$
		\end{center}
		\begin{center}
			$=(\frac{1+|\phi(0)|}{1-|\phi(0)|})(||h||_{\mathnormal{H}_{h}^{2}(\mathbb{D})} + ||g||_{\mathnormal{H}_{h}^{2}(\mathbb{D})})^2$.
		\end{center}
		Therefore $\mathnormal{C}_{\phi}$ is bounded and 
		\begin{center}
			$||\mathnormal{C}_{\phi}f||_{\mathnormal{H}_{h}^{2}(\mathbb{D})} \leq \sqrt{\frac{1+|\phi(0)|}{1-|\phi(0)|}}(||h||_{\mathnormal{H}_{h}^{2}} + ||g||_{\mathnormal{H}_{h}^{2}}) \leq \sqrt{\frac{1+|\phi(0)|}{1-|\phi(0)|}} 2||f||_{\mathnormal{H}_{h}^{2}(\mathbb{D})}$.	
		\end{center}
		Thus, 
		\begin{center}
			$||\mathnormal{C}_{\phi}||_{\mathnormal{H}_{h}^{2}(\mathbb{D})} \leq 2\sqrt{\frac{1+|\phi(0)|}{1-|\phi(0)|}}$.	
		\end{center}
	\end{proof}
Reproducing kernels give a lot of information about composition operators and very often, calculations with kernel functions give ways to connect the analytic and geometric properties of $\phi$ with the operator properties of $\mathnormal{C}_{\phi}$.  
	\begin{lem}
		If $\mathnormal{C}_{\phi}$ is a composition operator and $\mathnormal{K}_{\alpha}$ is a reproducing kernel function, then $\mathnormal{C}_{\phi}^{*}\mathnormal{K}_{\alpha}=\mathnormal{K}_{\phi(\alpha)}.$
	\end{lem}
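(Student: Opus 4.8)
The plan is to read the identity straight off the definition of the Hilbert-space adjoint, using only the reproducing property of Theorem 2.3; no real analysis is needed beyond the boundedness already in hand. First I would fix the standing hypotheses: $\phi$ is an analytic self-map of $\mathbb{D}$, so by Theorem 3.6 the operator $\mathnormal{C}_{\phi}$ is bounded on $\mathnormal{H}_{h}^{2}(\mathbb{D})$ and therefore has a bounded adjoint $\mathnormal{C}_{\phi}^{*}$; moreover $\alpha\in\mathbb{D}$ forces $\phi(\alpha)\in\mathbb{D}$, so $\mathnormal{K}_{\phi(\alpha)}$ is a genuine element of $\mathnormal{H}_{h}^{2}(\mathbb{D})$ and the claimed equality at least makes sense.

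Next, for an arbitrary $f=h+\overline{g}\in\mathnormal{H}_{h}^{2}(\mathbb{D})$, I would compute $\langle f,\mathnormal{C}_{\phi}^{*}\mathnormal{K}_{\alpha}\rangle$. By the definition of the adjoint this is $\langle \mathnormal{C}_{\phi}f,\mathnormal{K}_{\alpha}\rangle=\langle f\circ\phi,\mathnormal{K}_{\alpha}\rangle$. Since $\mathnormal{C}_{\phi}$ is well-defined, $f\circ\phi$ again lies in $\mathnormal{H}_{h}^{2}(\mathbb{D})$, so the reproducing property of Theorem 2.3 applies to it and gives $\langle f\circ\phi,\mathnormal{K}_{\alpha}\rangle=(f\circ\phi)(\alpha)=f(\phi(\alpha))$. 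Applying the reproducing property a second time, now at the point $\phi(\alpha)\in\mathbb{D}$, yields $f(\phi(\alpha))=\langle f,\mathnormal{K}_{\phi(\alpha)}\rangle$. Chaining these equalities, $\langle f,\mathnormal{C}_{\phi}^{*}\mathnormal{K}_{\alpha}\rangle=\langle f,\mathnormal{K}_{\phi(\alpha)}\rangle$ for every $f$; since a vector in a Hilbert space is determined by its inner products against all vectors, $\mathnormal{C}_{\phi}^{*}\mathnormal{K}_{\alpha}=\mathnormal{K}_{\phi(\alpha)}$.

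Two remarks on why the argument stays clean. The computation uses only linearity of the inner product in its first slot, so the asymmetry introduced by the $\overline{g}$-part of elements of $\mathnormal{H}_{h}^{2}(\mathbb{D})$ never interferes, and density of the family $\{\mathnormal{K}_{\alpha}\}$ --- while true --- is not needed for this identity. The single point deserving an explicit line, and the nearest thing to an obstacle, is justifying that the reproducing property may be invoked for $f\circ\phi$ and not merely for $f$: this is exactly the well-definedness half of Theorem 3.6, and it should be cited. I would end by noting the corollary form $\langle \mathnormal{C}_{\phi}^{*}\mathnormal{K}_{\alpha},\mathnormal{K}_{\beta}\rangle=\mathnormal{K}_{\phi(\alpha)}(\beta)$, which is the shape in which this lemma is used in subsequent kernel computations.
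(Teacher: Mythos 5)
Your argument is correct and is the standard one: the paper itself gives no proof of this lemma, but the identical chain $\langle f,\mathnormal{C}_{\phi}^{*}\mathnormal{K}_{\alpha}\rangle=\langle \mathnormal{C}_{\phi}f,\mathnormal{K}_{\alpha}\rangle=(f\circ\phi)(\alpha)=f(\phi(\alpha))=\langle f,\mathnormal{K}_{\phi(\alpha)}\rangle$ is exactly what the authors invoke later in the proof of Theorem 3.10, so you are on the same route. Your two explicit justifications --- that $\mathnormal{C}_{\phi}$ must first be known bounded (Theorem 3.6) for $\mathnormal{C}_{\phi}^{*}$ to exist and for the reproducing property to apply to $f\circ\phi$, and that $\phi(\alpha)\in\mathbb{D}$ makes $\mathnormal{K}_{\phi(\alpha)}$ a legitimate element of the space --- are precisely the points the paper leaves implicit.
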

	\begin{thm}
		For every composition operator $\mathnormal{C}_{\phi}$, 
		\begin{center}
			$\frac{1}{\sqrt{1-|\phi(0)|^2}} \leq ||\mathnormal{C}_{\phi}||_{\mathnormal{H}_{h}^{2}(\mathbb{D})} \leq \frac{4}{\sqrt{1-|\phi(0)|^2}}$.
		\end{center}	
	\end{thm}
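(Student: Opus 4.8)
The plan is to establish the two bounds independently: the lower bound comes from the action of the adjoint $\mathnormal{C}_{\phi}^{*}$ on the reproducing kernels, and the upper bound is obtained by rewriting the estimate already proved in Theorem 3.8 in a more convenient form.

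For the lower bound, first I would invoke Lemma 3.9, which gives $\mathnormal{C}_{\phi}^{*}\mathnormal{K}_{\alpha}=\mathnormal{K}_{\phi(\alpha)}$ for every $\alpha\in\mathbb{D}$. Specializing to $\alpha=0$ yields $\mathnormal{C}_{\phi}^{*}\mathnormal{K}_{0}=\mathnormal{K}_{\phi(0)}$. Since an operator and its adjoint have equal norms, and since $\|\mathnormal{C}_{\phi}^{*}\|\geq\|\mathnormal{C}_{\phi}^{*}\mathnormal{K}_{0}\|_{\mathnormal{H}_{h}^{2}(\mathbb{D})}/\|\mathnormal{K}_{0}\|_{\mathnormal{H}_{h}^{2}(\mathbb{D})}$, I would then use the norm formula $\|\mathnormal{K}_{\alpha}\|_{\mathnormal{H}_{h}^{2}(\mathbb{D})}=\sqrt{2}/\sqrt{1-|\alpha|^{2}}$ recorded in the proof of Theorem 3.2. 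With $\alpha=0$ this gives $\|\mathnormal{K}_{0}\|_{\mathnormal{H}_{h}^{2}(\mathbb{D})}=\sqrt{2}$, while $\|\mathnormal{K}_{\phi(0)}\|_{\mathnormal{H}_{h}^{2}(\mathbb{D})}=\sqrt{2}/\sqrt{1-|\phi(0)|^{2}}$, so the quotient is exactly $1/\sqrt{1-|\phi(0)|^{2}}$, which is the claimed lower bound.

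For the upper bound, I would start from the estimate $\|\mathnormal{C}_{\phi}\|_{\mathnormal{H}_{h}^{2}(\mathbb{D})}\leq 2\sqrt{(1+|\phi(0)|)/(1-|\phi(0)|)}$ established in Theorem 3.8. Multiplying numerator and denominator inside the root by $1+|\phi(0)|$ gives $(1+|\phi(0)|)/(1-|\phi(0)|)=(1+|\phi(0)|)^{2}/(1-|\phi(0)|^{2})$, hence $\sqrt{(1+|\phi(0)|)/(1-|\phi(0)|)}=(1+|\phi(0)|)/\sqrt{1-|\phi(0)|^{2}}$. Since $\phi$ maps $\mathbb{D}$ into $\mathbb{D}$, we have $|\phi(0)|<1$, so $1+|\phi(0)|<2$, and therefore this quantity is at most $2/\sqrt{1-|\phi(0)|^{2}}$; multiplying by the leading factor $2$ yields $\|\mathnormal{C}_{\phi}\|_{\mathnormal{H}_{h}^{2}(\mathbb{D})}\leq 4/\sqrt{1-|\phi(0)|^{2}}$.

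Both steps are short, so I do not anticipate a serious obstacle. The only genuine inputs are Lemma 3.9 together with the kernel norm formula for the lower bound and Theorem 3.8 for the upper bound; the rest is the elementary inequality $1+|\phi(0)|<2$ and a square-root manipulation. If any difficulty surfaces it would be in the rigorous justification of Lemma 3.9 (that the family $\{\mathnormal{K}_{\alpha}\}$ spans a dense subspace, so that the adjoint formula is fully determined), but since Lemma 3.9 is already available, the present theorem follows at once.
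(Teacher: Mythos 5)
Your proposal is correct and follows essentially the same route as the paper: the lower bound via $\mathnormal{C}_{\phi}^{*}\mathnormal{K}_{0}=\mathnormal{K}_{\phi(0)}$ together with the kernel norm formula $\|\mathnormal{K}_{\alpha}\|_{\mathnormal{H}_{h}^{2}(\mathbb{D})}=\sqrt{2}/\sqrt{1-|\alpha|^{2}}$, and the upper bound by rewriting $\sqrt{(1+|\phi(0)|)/(1-|\phi(0)|)}=(1+|\phi(0)|)/\sqrt{1-|\phi(0)|^{2}}\leq 2/\sqrt{1-|\phi(0)|^{2}}$ in the estimate from the boundedness theorem. No gaps beyond those already present in the paper's own argument.
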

	\begin{proof}
		Using lemma (3.7) with $\alpha=0$ yields 
		\begin{center}
			$\mathnormal{C}_{\phi}^{*}\mathnormal{K}_{0}=\mathnormal{K}_{\phi(0)}$
		\end{center} 
	Note that in \cite{GTGH} 
		\begin{center}
			$||\mathnormal{K}_{\alpha}||_{\mathnormal{H}_{h}^{2}(\mathbb{D})}^{2}=2(\frac{1}{1-|\alpha|^{2}})$,
		\end{center}
		and therefore 
		\begin{center}
			$||\mathnormal{K}_{0}||_{\mathnormal{H}_{h}^{2}(\mathbb{D})}=\sqrt{2}$ and $||\mathnormal{K}_{\phi(0)}||_{\mathnormal{H}_{h}^{2}(\mathbb{D})}=\frac{\sqrt{2}}{\sqrt{1-|\phi(0)|^2}}$.
		\end{center} 
		Since \begin{center}
			$||\mathnormal{K}_{\phi(0)}||_{\mathnormal{H}_{h}^{2}(\mathbb{D})}=||\mathnormal{C}_{\phi}^{*}\mathnormal{K}_{0}||_{\mathnormal{H}_{h}^{2}(\mathbb{D})} \leq ||\mathnormal{C}_{\phi}^{*}||_{\mathnormal{H}_{h}^{2}(\mathbb{D})}||\mathnormal{K}_{0}||_{\mathnormal{H}_{h}^{2}(\mathbb{D})}$, 
		\end{center} 
		it follows that 
		\begin{center}
			$\frac{\sqrt{2}}{\sqrt{1-|\phi(0)|^2}} \leq {\sqrt{2}}||\mathnormal{C}_{\phi}^{*}||_{\mathnormal{H}_{h}^{2}(\mathbb{D})}={\sqrt{2}}||\mathnormal{C}_{\phi}||_{\mathnormal{H}_{h}^{2}(\mathbb{D})}$.
		\end{center}
		This implies that 
		\begin{center}
			$\frac{1}{\sqrt{1-|\phi(0)|^2}} \leq ||\mathnormal{C}_{\phi}||_{\mathnormal{H}_{h}^{2}(\mathbb{D})}.$
		\end{center}
		To prove the other inequality, we begin with the result from theorem (3.6): 
		\begin{center}
			$||\mathnormal{C}_{\phi}||_{\mathnormal{H}_{h}^{2}(\mathbb{D})} \leq 2\sqrt{\frac{1+|\phi(0)|}{1-|\phi(0)|}}$.
		\end{center}
		Observe that, for $0\leq r<1$, we have the inequality 
		\begin{center}
			$\sqrt{\frac{1+r}{1-r}}=\sqrt{\frac{(1+r)^{2}}{1-r^{2}}}=\frac{1+r}{\sqrt{1-r^{2}}} \leq \frac{2}{\sqrt{1-r^{2}}}$.	
		\end{center}
		It follows that 
		\begin{center}
			$||\mathnormal{C}_{\phi}||_{\mathnormal{H}_{h}^{2}(\mathbb{D})} \leq 2\sqrt{\frac{1+|\phi(0)|}{1-|\phi(0)|}} \leq 2(\frac{2}{\sqrt{1-|\phi(0)|^{2}}})=\frac{4}{\sqrt{1-|\phi(0)|^{2}}}.$
		\end{center}
		Thus, the result follows.
	\end{proof}
	\begin{cor}
		If $\phi(0)=0$, then $1 \leq ||\mathnormal{C}_{\phi}||_{\mathnormal{H}_{h}^{2}(\mathbb{D})} \leq 4$. 
	\end{cor}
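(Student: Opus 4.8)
The plan is to specialize Theorem 3.9 to the case $\phi(0)=0$; no new argument is needed. Theorem 3.9 asserts that for every composition operator $\mathnormal{C}_{\phi}$ on $\mathnormal{H}_{h}^{2}(\mathbb{D})$ one has
\[
\frac{1}{\sqrt{1-|\phi(0)|^{2}}} \;\leq\; \|\mathnormal{C}_{\phi}\|_{\mathnormal{H}_{h}^{2}(\mathbb{D})} \;\leq\; \frac{4}{\sqrt{1-|\phi(0)|^{2}}},
\]
so the whole proof reduces to substituting the hypothesis $\phi(0)=0$, i.e.\ $|\phi(0)|=0$, into both ends of this chain of inequalities. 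With $|\phi(0)|=0$ we get $1-|\phi(0)|^{2}=1$ and hence $\sqrt{1-|\phi(0)|^{2}}=1$, so the lower bound collapses to $1$ and the upper bound collapses to $4$. This yields $1 \leq \|\mathnormal{C}_{\phi}\|_{\mathnormal{H}_{h}^{2}(\mathbb{D})} \leq 4$ immediately.

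First I would recall (from Theorem 3.6) that $\mathnormal{C}_{\phi}$ is in fact bounded for every analytic self-map $\phi$ of $\mathbb{D}$, so the norm appearing in the displayed estimate is finite and the two-sided bound is genuinely an estimate on a real number; then I would invoke Theorem 3.9 and perform the substitution above. Since this is a direct numerical corollary of the preceding theorem, there is no real obstacle to overcome — the only thing worth noting is that $\phi(0)=0$ is precisely the normalization that makes the constants in Theorem 3.9 independent of $\phi$, which is why the resulting bounds are the absolute numbers $1$ and $4$.
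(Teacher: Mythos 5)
Your proposal is correct and is exactly the intended argument: the paper states this corollary as an immediate consequence of Theorem 3.9, obtained by substituting $|\phi(0)|=0$ into the two-sided bound $\frac{1}{\sqrt{1-|\phi(0)|^{2}}} \leq \|\mathnormal{C}_{\phi}\|_{\mathnormal{H}_{h}^{2}(\mathbb{D})} \leq \frac{4}{\sqrt{1-|\phi(0)|^{2}}}$. Nothing further is needed.
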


	\begin{thm}
		An operator $\mathnormal{T}$ on $\mathnormal{H}_{h}^{2}(\mathbb{D})$ is a composition operator if and only if the adjoint operator $\mathnormal{T}^{*}$ maps the set of reproducing kernels into itself. 
	\end{thm}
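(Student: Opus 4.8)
The forward implication follows from results already at hand. If $T = C_{\phi}$ for an analytic self-map $\phi$ of $\mathbb{D}$, then $C_{\phi}$ is bounded by Theorem 3.6, so $C_{\phi}^{\ast}$ is defined, and Lemma 3.7 gives $C_{\phi}^{\ast} K_{\alpha} = K_{\phi(\alpha)}$ for every $\alpha \in \mathbb{D}$; since $\phi(\alpha) \in \mathbb{D}$ this is again a reproducing kernel, so $T^{\ast}$ sends $\{K_{\alpha} : \alpha \in \mathbb{D}\}$ into itself. The substance of the theorem is the converse, which I would organize in three steps, assuming throughout that $T$ is bounded so that $T^{\ast}$ makes sense.

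\emph{Step 1 (produce a symbol).} Suppose $T^{\ast} K_{\alpha} \in \{K_{\beta} : \beta \in \mathbb{D}\}$ for every $\alpha$. The correspondence $\beta \mapsto K_{\beta}$ is injective: expanding $K_{\beta}(z) = \sum_{n \ge 0} \bar{\beta}^{\,n} z^{n} + \overline{\sum_{n \ge 0} \bar{\beta}^{\,n} z^{n}}$ and reading off the coefficient of $z$ recovers $\bar{\beta}$. Hence there is a well-defined map $\psi : \mathbb{D} \to \mathbb{D}$ with $T^{\ast} K_{\alpha} = K_{\psi(\alpha)}$.

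\emph{Step 2 (identify $T$ with $C_{\psi}$).} For $f \in H_{h}^{2}(\mathbb{D})$ and $\alpha \in \mathbb{D}$, the reproducing property of Theorem 2.3 and the definition of the adjoint give
\[
(Tf)(\alpha) = \langle Tf, K_{\alpha} \rangle = \langle f, T^{\ast} K_{\alpha} \rangle = \langle f, K_{\psi(\alpha)} \rangle = f(\psi(\alpha)).
\]
Since an element of $H_{h}^{2}(\mathbb{D})$ is a genuine function on $\mathbb{D}$, this says $Tf = f \circ \psi$ pointwise, i.e. $T = C_{\psi}$; in particular $f \circ \psi \in H_{h}^{2}(\mathbb{D})$ for every $f \in H_{h}^{2}(\mathbb{D})$.

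\emph{Step 3 (regularity of $\psi$).} That $\psi(\mathbb{D}) \subseteq \mathbb{D}$ is automatic, the kernels being indexed by points of $\mathbb{D}$. The point I expect to be the real obstacle is showing that $\psi$ is analytic, so that $C_{\psi}$ is a composition operator in the sense fixed in the introduction: in the harmonic setting the image of $z \mapsto z$ under $T$, which equals $\psi$, is a priori only harmonic, whereas in the analytic Hardy space it would be analytic for free. To get around this I would also apply $T$ to $z \mapsto z^{2}$, so that $\psi^{2}$ is harmonic too; writing $\psi = P + \overline{Q}$ with $P, Q$ analytic and using $\Delta = 4\,\partial_{z}\partial_{\bar z}$ with $\Delta\psi = 0$, one computes $\Delta(\psi^{2}) = 8\,(\partial_{z}\psi)(\partial_{\bar z}\psi) = 8\,P'\,\overline{Q'}$, so $P'\overline{Q'} \equiv 0$ on $\mathbb{D}$ and, by the identity theorem, $\psi$ coincides up to an additive constant with an analytic or an anti-analytic function. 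Excluding the anti-analytic alternative — so that $\psi$ is an analytic self-map — is the crux of the argument; granting it, $T = C_{\psi}$ is a composition operator and the proof is complete. The reproducing-kernel bookkeeping of Steps 1--2 is routine; it is this last regularity step that carries the weight.
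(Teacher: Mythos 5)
Your forward direction and Steps 1--2 coincide with the paper's argument: the paper likewise defines the symbol from $T^{*}K_{\alpha}=K_{\alpha'}$ and uses $\langle Tf,K_{\alpha}\rangle=\langle f,T^{*}K_{\alpha}\rangle=f(\phi(\alpha))$ to identify $T$ with $C_{\phi}$ pointwise. Where you diverge is Step 3, and you are right that this is where all the weight sits. The paper disposes of it by taking $f(z)=z+\bar z$ and asserting that $g=Tf$, being in $H_{h}^{2}(\mathbb{D})$, ``is thus analytic'' and equals $\phi$ --- but elements of $H_{h}^{2}(\mathbb{D})$ are only harmonic, and moreover $f(\phi(\alpha))=\phi(\alpha)+\overline{\phi(\alpha)}=2\,\Re\phi(\alpha)$, not $\phi(\alpha)$, so the paper's own regularity step does not go through as written. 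Your computation $\Delta(\psi^{2})=8\,P'\,\overline{Q'}$ (with $\psi=T(z)$ harmonic and $\psi^{2}=T(z^{2})$ harmonic) is correct and is a genuinely better argument: it reduces the question to excluding the anti-analytic alternative.

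However, that alternative cannot be excluded, so the gap you flag at the end of Step 3 is not a missing lemma but a counterexample to the theorem as stated. Take $\psi(z)=\bar z$ and $Tf=f(\bar z)$. For $f=\sum a_{n}z^{n}+\overline{\sum b_{n}z^{n}}$ one gets $f(\bar z)=\sum \overline{b_{n}}\,z^{n}+\overline{\sum \overline{a_{n}}\,z^{n}}$, so $T$ is a surjective isometry of $H_{h}^{2}(\mathbb{D})$, and $\langle Tf,K_{\alpha}\rangle=f(\bar\alpha)=\langle f,K_{\bar\alpha}\rangle$ gives $T^{*}K_{\alpha}=K_{\bar\alpha}$: the adjoint maps the set of reproducing kernels into itself, yet $T$ is not $C_{\phi}$ for any \emph{analytic} self-map $\phi$, which is the only notion of composition operator the paper defines. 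So the correct conclusion of the converse in the harmonic setting is that $T=C_{\psi}$ for a self-map $\psi$ that is either analytic or anti-analytic (up to the degenerate constant case), and the theorem needs to be restated accordingly. Your proposal is the more honest of the two arguments; to finish it you should not try to ``exclude the anti-analytic alternative'' but rather observe that both branches occur and weaken the statement.
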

	\begin{proof}	
		We showed in lemma (3.7) that $\mathnormal{T}^{*}\mathnormal{K}_{\alpha}=\mathnormal{K}_{\phi(\alpha)}$ when $\mathnormal{T}=\mathnormal{C}_{\phi}$. Conversely, suppose that for each $\alpha \in \mathbb{D}$, $\mathnormal{T}^{*}\mathnormal{K}_{\alpha}=\mathnormal{K}_{\phi(\alpha^{'})}$ for some $\alpha^{'} \in \mathbb{D}$. Define $\phi : \mathbb{D} \rightarrow \mathbb{D}$ by 
		\begin{center}
			$\phi(\alpha)=\alpha^{'}.$
		\end{center}
		Notice that, for $f \in \mathnormal{H}_{h}^{2}(\mathbb{D})$, 
		\begin{center}
			$\langle \mathnormal{T}f, \mathnormal{K}_{\alpha}\rangle = \langle f, \mathnormal{T}^{*}\mathnormal{K}_{\alpha}\rangle= \langle f, \mathnormal{K}_{\phi(\alpha)}\rangle = f(\phi(\alpha)) $.
		\end{center}
		If we take $f(z)=z+\bar{z}$, then $g=\mathnormal{T}f$ is in $\mathnormal{H}_{h}^{2}(\mathbb{D})$, and is thus analytic. But then, by the above equation we have 
		\begin{center}
			$g(\alpha)= \langle g, \mathnormal{K}_{\alpha} \rangle=\langle \mathnormal{T}f, \mathnormal{K}_{\alpha} \rangle=f(\phi(\alpha))=\phi(\alpha)$. 
		\end{center}	
		Therefore, $g=\phi$ and $\phi$ is analytic, so the composition operator $\mathnormal{C}_{\phi}$ is well-defined and bounded. It follows that $\mathnormal{T}=\mathnormal{C}_{\phi}$, since 
		\begin{center}
			$(\mathnormal{T}f)(\alpha)=\langle \mathnormal{T}f, \mathnormal{K}_{\alpha} \rangle= f(\phi(\alpha))=(\mathnormal{C}_{\phi}f)(\alpha)$ 
		\end{center} 
		for all $f$ in $\mathnormal{H}_{h}^{2}(\mathbb{D})$.	
	\end{proof}
	\subsection*{Conclusion}
	In this paper we introduced composition operators on Hilbert space of complex-valued harmonic functions in the unit disc $\mathbb{D}$. Particularly, we found analytic self map $\phi$ on unit disc for which the composition operator is an isometry and demonstrated with an example $\phi$ for which  $\mathnormal{C}_{\phi}$ is not an isometry. We also characterized the boundedness of $\mathnormal{C}_{\phi}$ in terms of analytic self-map $\phi,$  Poisson integral, and established the relationship between composition operator and reproducing kernel on Hilbert space of complex-valued harmonic functions.	
	\bibliographystyle{plain}
		
\end{document}